\let\MYcaption\@makecaption
\let\@makecaption\MYcaption
\def\BibTeX{{\rm B\kern-.05em{\sc i\kern-.025em b}\kern-.08em
    T\kern-.1667em\lower.7ex\hbox{E}\kern-.125emX}}
\theoremstyle{plain}
\newtheorem{lemma}{Lemma}
\newtheorem{corollary}{Corollary}
\theoremstyle{definition}
\def\({\left(}
\def\){\right)}
\def\[{\left[}
\def\]{\right]}
\def\abf{{\bf a}}
\def\bbf{{\bf b}}
\def\dbf{{\bf d}}
\def\Gbf{{\bf G}}  
\def\Jbf{{\bf J}}  
\def\Kbf{{\bf K}}  
\def\Mbf{{\bf M}}  
\def\Nbf{{\bf N}}  
\def\Pbf{{\bf P}}  
\def\Qbf{{\bf Q}}  
\def\Rbf{{\bf R}}  
\def\Sbf{{\bf S}}  
\def\Tbf{{\bf T}}  
\def\Ubf{{\bf U}}  \def\ubf{{\bf u}}
\def\Vbf{{\bf V}}  
\def\Wbf{{\bf W}}  \def\wbf{{\bf w}}
\def\xbf{{\bf x}}
\def\Ybf{{\bf Y}}  \def\ybf{{\bf y}}
\def\Zbf{{\bf Z}}  
\def\Deltabf{\bm{\Delta}}  \def\deltabf{\bm{\delta}}
\def\etabf{{\bm{\eta}}}
\def\xibf{{\bm{\xi}}}
\def\Phibf{\bm{\Phi}}
\def\Ccal{\mathcal{C}}  
\def\Hcal{\mathcal{H}}  
\def\Rcal{\mathcal{R}}  
\newif\ifshowWriterComment
\newcommand{\SFpair}{\{\Phibf_\xbf, \Phibf_\ubf\}}
\newcommand{\OFqple}{\{\Phibf_{\xbf\xbf}, \Phibf_{\ubf\xbf}, \Phibf_{\xbf\ybf}, \Phibf_{\ubf\ybf}\}}
\def\mat#1{\begin{bmatrix}#1\end{bmatrix}}
\def\t{[t]}
\def\tm{[t-1]}
\def\cor#1{Corollary~\ref{cor:#1}}
\def\fig#1{Fig.~\ref{fig:#1}}
\def\lem#1{Lemma~\ref{lem:#1}}
\def\sec#1{Section~\ref{sec:#1}}
\def\eqn#1{\eqref{eqn:#1}}
\def\st{{\rm s.t.}}
\def\OptConsSep{&&\quad}
\newcommand\OptCons[3]{
&\ #1 
\ifx\\#2\\ \else \OptConsSep #2 \fi%
\ifx\\#3\\ \nonumber \else \label{eqn:#3} \fi%
}
\newcommand{\OptMinN}[2]{
\begin{alignat*}{2}
\min\ &\ #1 \\
\st\ #2 
\end{alignat*}
}
\title{\LARGE \bf Realization, Internal Stability, and Controller Synthesis}
\author{Shih-Hao Tseng
\thanks{Shih-Hao Tseng is with the Division of Engineering and Applied Science, California Institute of Technology, Pasadena, CA 91125, USA.  Email: {\tt\small shtseng@caltech.edu}}
}
\begin{document}

\maketitle
\thispagestyle{empty}
\pagestyle{empty}

\bstctlcite{IEEE_BSTcontrol}

\begin{abstract}

We have witnessed the emergence of several controller parameterizations and the corresponding synthesis methods, including Youla, system level, input-output, and many other new proposals. Meanwhile, under the same synthesis method, there are multiple realizations to adopt. Different synthesis methods/realizations target different plants/scenarios. Except for some case-by-case studies, we don't currently have a unified framework to understand their relationships.

To address the issue, we show that existing controller synthesis methods and realization proposals are all special cases of a simple lemma, the realization-stability lemma. The lemma leads to easier equivalence proofs among existing methods and enables the formulation of a general controller synthesis problem, which provides a unified foundation for controller synthesis and realization derivation. 

\end{abstract}

\section{Introduction}\label{sec:introduction}

Synthesizing an internally stabilizing controller is a daunting task, especially for large-scale, complex, networked systems with multiple-input multiple-output plants. A well-celebrated pioneer work on controller synthesis is by Youla et al. \cite{youla1976modern1, youla1976modern2}, which shows that the set of all internally stabilizing controllers can be parameterized using a coprime factorization approach. One drawback of Youla parameterization is the difficulty of imposing structural constraints -- the constraints could only be imposed (while maintaining convexity
) 
in an intricate form admitting quadratic invariant property \cite{rotkowitz2005characterization,sabau2014youla,lessard2015convexity}.
To address this issue, system level parameterization (SLP) \cite{anderson2019system,wang2019system} proposes to work on the closed-loop system response and the corresponding system level synthesis (SLS) method can easily incorporate multiple structural constraints into a much simpler convex program \cite{wang2016localized,anderson2017structured}.
The success of SLP triggers the study of affine space parameterization of internally stabilizing controllers. \cite{furieri2019input} shows that the set of internally stabilizing controllers can also be parameterized in an input-output manner using the input-output parameterization (IOP).
Though a recent paper shows that Youla, SLP, and IOP are equivalent \cite{zheng2020equivalence}, there are still new affine space parameterizations found \cite{zheng2019system}.

Given the flourishing development of novel parameterizations and their corresponding synthesis methods, we have some natural questions to ask: Have we exhausted all possible parameterizations? Will we discover new synthesis methods? If so, why would they be the way they are? And, perhaps more importantly, how could we find/understand them systematically?

To add to this already puzzling situation, we have seen new results on realizations. Realizations, or block diagrams/implementations,\footnote{We adopt the terminology in \cite{tseng2020deployment,tsengsubsynthesis} that distinguishes ``realizations'' from ``implementations,'' where the former refers to the block diagrams (mathematical expressions) and the latter is reserved for the physical architecture consisting of computation, memory, and communication units.} 
describe how a system can be built from some interconnection of basic blocks/transfer functions.
It is well known, also shown by recent studies \cite{tseng2020deployment,li2020separating}, that the same controller can admit multiple different realizations, even under the same parameterization scheme. 
We would then wonder if we can only handle those realizations case by case, or if there is a unified framework to study them.

\subsection{Contributions and Organization}

The main contribution of this paper is the answers to all of the above seemingly unrelated questions through a simple \emph{realization-stability lemma} that relates closed-loop realizations with internal stability. The lemma enables us to formulate the general controller synthesis problem that can derive \emph{all} possible parameterizations, thus providing a systematic way to study controller synthesis problems. We show that existing methods on controller synthesis and realization are all special cases of the general formulation. In addition, the lemma reveals that the transformation of external disturbances can be seen as the derivation of an equivalent system. The concept of equivalent systems then enables easy proof of equivalence among synthesis methods.

The paper is organized as follows. In~\sec{R-S_lemma}, we derive the realization-stability lemma, introduce equivalent systems under transformations, and formulate the general controller synthesis problem. We then show in~\sec{controller_synthesis} and \sec{realizations} that existing methods are all special cases of the general framework and direct applications of the realization-stability lemma. In~\sec{controller_synthesis}, we revisit controller synthesis theories, including Youla \cite{youla1976modern2}, input-output \cite{furieri2019input}, system level \cite{wang2019system,anderson2019system}, and some mixed parameterizations \cite{zheng2019system}, and verify the equivalence results in~\cite{zheng2020equivalence}. In~\sec{realizations}, we derive the original SLS realization and alternative realizations from \cite{tseng2020deployment} and \cite{li2020separating} using the realization-stability lemma. Finally, we conclude the paper in~\sec{conclusion}.

\subsection{Notation} 

\def\Rp{\Rcal_{p}}
\def\Rsp{\Rcal_{sp}}
\def\RHinf{\Rcal\Hcal_{\infty}}

Let $\Rp$, $\Rsp$, and $\RHinf$ denote the set of proper, strictly proper, and stable proper transfer matrices, respectively, all defined according to the underlying setting, continuous or discrete.
Lower- and upper-case letters (such as $x$ and $A$) denote vectors and matrices respectively, while bold lower- and upper-case characters and symbols (such as $\ubf$ and $\Rbf$) are reserved for signals and transfer matrices. We denote by $I$ and $O$ the identity and all-zero matrices (with dimensions defined according to the context). 

\def\xx{\xbf\xbf}  \def\xu{\xbf\ubf}  \def\xy{\xbf\ybf}
\def\ux{\ubf\xbf}  \def\uu{\ubf\ubf}  \def\uy{\ubf\ybf}
\def\yx{\ybf\xbf}  \def\yu{\ybf\ubf}  \def\yy{\ybf\ybf}

\def\xd{\xbf\deltabf}
\def\ud{\ubf\deltabf}
\def\dx{\deltabf\xbf}  \def\du{\deltabf\ubf}  \def\dd{\deltabf\deltabf}

\section{Realization-Stability Lemma}\label{sec:R-S_lemma}
To begin with, we define the realization and internal stability matrices to derive the realization-stability lemma. We then discuss the transformation of external disturbances and introduce the concept of equivalent systems. Using the realization-stability lemma, we propose the formulation of a general controller synthesis problem. 

We remark that the results in this section are general: They apply to both discrete-time and continuous-time systems.

\subsection{Realization and Internal Stability}\label{sec:R-S_lemma-RS}
\begin{figure}
\centering
\includegraphics[scale=1]{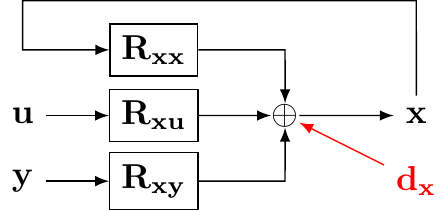}
\caption{The realization matrix $\Rbf$ describes each signal as a linear combination of the signals in the closed-loop system and the external disturbance $\dbf$. In the following figures of realizations, we omit drawing the additive disturbance $\dbf$ for simplicity.}
\label{fig:R-S-realization}
\end{figure}

We consider a closed-loop linear system with internal state $\etabf$ and external disturbance $\dbf$. The system operates according to the \emph{realization matrix} $\Rbf$:
\begin{align}
\etabf = \Rbf \etabf + \dbf.
\label{eqn:R}
\end{align}
$\etabf$ summarizes \emph{all} signals in the system. For instance, a state-feedback system might have $\etabf = \mat{\xbf\\ \xibf\\ \ubf}$ where $\xbf$ is the state, $\xibf$ is the internal state, and $\ubf$ is the control. For a given signal $\abf$, we denote by $e_{\abf}$ the column block that is identity at the rows corresponding to $\abf$ in $\etabf$. As a result,
$\etabf = \sum\limits_{\abf} e_{\abf} \abf$. 

$\Rbf$ describes each signal as a linear combination of the signals in the system. We denote by $\Rbf_{\abf\bbf}$ the transfer matrix block from signal $\bbf$ to $\abf$ as shown in~\fig{R-S-realization}, and hence given a signal $\abf$, we have $\abf = \sum\limits_{\bbf} \Rbf_{\abf\bbf} \bbf + \dbf_{\abf}$, where $\dbf_{\abf}$ is the external disturbance on $\abf$. Notice that all dimensions in the internal state $\etabf$ have their corresponding share in $\dbf$, thereby avoiding the partial selection issues discussed in \cite{zheng2020equivalence}.

On the other hand, if we deem the external disturbance $\dbf$ as the input and the internal state $\etabf$ as the output, we can treat the closed-loop system as an open-loop system. We denote by the \emph{internal stability matrix} (or \emph{stability matrix} for short) $\Sbf$ the transfer matrix of such an open-loop system:
\begin{align}
\etabf = \Sbf \dbf.
\label{eqn:S}
\end{align}
We define $\Sbf_{\abf\bbf}$ as the transfer matrix block from disturbance on $\bbf$ to the signal $\abf$, and the columns in $\Sbf$ corresponding to $\bbf$ is denoted by $\Sbf_{:,\bbf}$.

The realization matrix $\Rbf$ and the stability matrix $\Sbf$ are related by the following lemma.
\begin{lemma}[Realization-Stability]\label{lem:R-S}
Let $\Rbf$ be the realization matrix and $\Sbf$ be the internal stability matrix, we have
\begin{align*}
(I - \Rbf) \Sbf = \Sbf(I - \Rbf) = I.
\end{align*}
\end{lemma}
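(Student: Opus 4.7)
The plan is to treat the two defining equations $\etabf = \Rbf\etabf + \dbf$ and $\etabf = \Sbf\dbf$ as identities that hold for every admissible external disturbance $\dbf$, and then to read off the two claimed matrix identities by comparing coefficients. Transfer matrices over the given ring are commutative in the scalar entries, so once I establish one of $(I-\Rbf)\Sbf = I$ or $\Sbf(I-\Rbf) = I$ for square matrices of the right dimension, the other follows automatically; nevertheless, I will give direct derivations of both for clarity.

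First I would rewrite the realization equation \eqn{R} as $(I-\Rbf)\etabf = \dbf$, then substitute the stability relation \eqn{S} into it to obtain $(I-\Rbf)\Sbf\dbf = \dbf$. Because $\dbf$ is an arbitrary external disturbance (every row of $\etabf$ has its own corresponding component in $\dbf$ by construction, as emphasized in the paragraph preceding the lemma), I can conclude $(I-\Rbf)\Sbf = I$. For the reverse composition, I would instead substitute $\etabf = \Sbf\dbf$ into the realization equation to get $\Sbf\dbf = \Rbf\Sbf\dbf + \dbf$, which rearranges to $(I - \Rbf\Sbf)\dbf = \dbf$; a second path is to start from $\etabf = \Sbf\dbf = \Sbf(I-\Rbf)\etabf$ and use that $\etabf$ ranges over all admissible signals (since $\dbf$ does and $\Sbf$ sends disturbances to states). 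Either way, I obtain $\Sbf(I-\Rbf) = I$.

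The only place that requires mild care is the ``for all $\dbf$'' and ``for all $\etabf$'' quantification used to cancel the signal vectors and conclude identities between transfer matrices. I would justify this by noting that the columns of $\Rbf$ and $\Sbf$ are defined blockwise so that $\dbf$ can be chosen to excite each row of $\etabf$ independently, exactly as set up in \sec{R-S_lemma-RS}. I do not anticipate any serious obstacle; the lemma is essentially a restatement that $\Sbf$ is the inverse transfer matrix of $I-\Rbf$, and the proof amounts to a one-line substitution in each direction.
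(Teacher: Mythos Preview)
Your proposal is correct and matches the paper's proof almost verbatim: substitute \eqn{S} into \eqn{R} to obtain $(I-\Rbf)\Sbf\,\dbf=\dbf$, use arbitrariness of $\dbf$ to conclude $(I-\Rbf)\Sbf=I$, and then invoke squareness to get $\Sbf(I-\Rbf)=I$. Your additional ``direct'' derivations of the reverse identity are superfluous and slightly off (the rearrangement to $(I-\Rbf\Sbf)\dbf=\dbf$ is a slip---it recovers $(I-\Rbf)\Sbf$ again---and the $\etabf$-arbitrary argument tacitly relies on the same square-inverse fact), but your primary line of reasoning is exactly the paper's.
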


\begin{proof}
Substituting \eqn{S} into \eqn{R} yields
\begin{align*}
(I - \Rbf)\etabf = (I - \Rbf)\Sbf \dbf = \dbf.
\end{align*}
Since $\dbf$ is arbitrary, we have
\begin{align*}
(I - \Rbf)\Sbf = I.
\end{align*}
Given $I - \Rbf$ and $\Sbf$ are both square matrices, we have
\begin{align*}
\Sbf = (I-\Rbf)^{-1}  \quad \Rightarrow \quad
\Sbf (I - \Rbf) = I,
\end{align*}
which concludes the proof.
\end{proof}

We remark that \lem{R-S} does not guarantee the existence of either $\Rbf$ or $\Sbf$. Rather, it says if both $\Rbf$ and $\Sbf$ exist, they must obey the relation. When they both exist, a consequence of \lem{R-S} is that $\Rbf \to \Sbf$ is a bijection map. In other words, if two systems have the same realization $\Rbf$ (or $I-\Rbf$, equivalently), they have the same internal stability $\Sbf$. 

\subsection{Disturbance Transformation and Equivalent System}\label{sec:R-S_lemma-T}
In \eqn{R}, the external disturbance $\dbf$ affects each signal in the system independently. We can extend \eqn{R} and \eqn{S} to the cases where the dimensions in $\dbf$ are correlated. In particular, the external disturbance could be a \emph{transformation} $\Tbf$ on a different basis $\wbf$:
\begin{align*}
\dbf = \Tbf \wbf.
\end{align*}

When the transformation $\Tbf$ is invertible, we have
\begin{align*}
(I - \Rbf) \etabf = \Tbf \wbf
\quad\Rightarrow&\quad
\Tbf^{-1}(I - \Rbf) \etabf = \wbf = (I-\Rbf_{eq}) \etabf,\\
&\etabf = \Sbf \Tbf \wbf = \Sbf_{eq} \wbf
\end{align*}
where $\Rbf_{eq} = I - \Tbf^{-1}(I - \Rbf)$ and $\Sbf_{eq} = \Sbf \Tbf$.
In other words, the transformation of the disturbance $\dbf = \Tbf \wbf$ can be seen as the derivation of an equivalent closed-loop system with realization $\Rbf_{eq}$ and stability $\Sbf_{eq}$ based on internal state $\etabf$ and external disturbance $\wbf$.

The derivation of an equivalent system is helpful for stability analysis. Since \lem{R-S} suggests that there is a bijection map from $\Rbf$ to $\Sbf$. If there are two systems with realizations $\Rbf_1$ and $\Rbf_2$ and we can relate them through an (invertible) transformation $\Tbf$ by 
\begin{align*}
(I - \Rbf_2) = \Tbf^{-1}(I-\Rbf_1),
\end{align*}
their stability matrices will follow
\begin{align*}
\Sbf_2 = \Sbf_1 \Tbf.
\end{align*}

\subsection{Controller Synthesis and Column Dependency}\label{sec:R-S_lemma-controller-synthesis}
Notice that \lem{R-S} holds for arbitrary realization/internal stability matrices, e.g., non-causal $\Rbf$ and unstable $\Sbf$. When synthesizing a controller, we require the closed-loop system to be causal and internally stable. In other words, the transfer functions from one signal to any different signal should be proper, and the transfer functions from the external disturbance $\dbf$ to the internal state $\etabf$ should be stable proper, which are written as the following conditions:
\begin{align}
\Rbf_{\abf\bbf} \in \Rp, \forall \abf \neq \bbf, \quad\quad \Sbf \in \RHinf.
\label{eqn:R-S-condition}
\end{align}
Here, we implicitly require the existence of both $\Rbf$ and $\Sbf$. Accordingly, general controller synthesis problems (i.e., \emph{all} possible controller synthesis problems for a system described by some $\Rbf$) can be formulated as
\OptMinN{
g(\Rbf,\Sbf)
}{
\OptCons{(I-\Rbf)\Sbf = \Sbf(I-\Rbf) = I}{}{}\\
\OptCons{\Rbf_{\abf\bbf} \in \Rp}{\forall \abf \neq \bbf}{}\\
\OptCons{\Sbf \in \RHinf}{}{}\\
\OptCons{(\Rbf,\Sbf) \in \Ccal}{}{}
}
where $g$ is the objective function and $\Ccal$ represents the additional constraints on the realization and internal stability. 
In the following sections, we will show that the existing controller synthesis methods/realization studies that focus on internal stability are essentially special cases of the feasible set in this general formulation.

A key constraint in the general controller synthesis problem is to enforce $\Sbf \in \RHinf$. Although we need to enforce all elements in $\Sbf$ to be in $\RHinf$, we can leverage the linear dependency among the components brought by \lem{R-S} to derive some parts automatically without explicit enforcement. In particular, we have \lem{dependency}.

\begin{lemma}\label{lem:dependency}
Let $\abf$ be a signal and $\Rbf_{\abf\abf} = O$, then
\begin{align*}
\Sbf_{:,\abf} = e_{\abf} + \sum\limits_{\bbf \neq \abf} \Sbf_{:,\bbf}\Rbf_{\bbf\abf}.
\end{align*}
\end{lemma}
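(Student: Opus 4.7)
The plan is to extract the column block of $\Sbf$ corresponding to $\abf$ directly from the identity $\Sbf(I-\Rbf) = I$ supplied by \lem{R-S}, and then use the block structure of $\Rbf$ together with the hypothesis $\Rbf_{\abf\abf} = O$ to simplify.

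First, I would right-multiply the identity $\Sbf(I-\Rbf) = I$ by the column selector $e_{\abf}$ to obtain
\begin{align*}
\Sbf(I-\Rbf)e_{\abf} = e_{\abf}.
\end{align*}
By the definition of the block notation, $\Sbf e_{\abf} = \Sbf_{:,\abf}$, and $\Rbf e_{\abf}$ is exactly the column block of $\Rbf$ associated with $\abf$, which decomposes as $\Rbf e_{\abf} = \sum_{\bbf} e_{\bbf} \Rbf_{\bbf\abf}$. Substituting these expressions in gives
\begin{align*}
\Sbf_{:,\abf} - \sum_{\bbf} \Sbf e_{\bbf} \Rbf_{\bbf\abf} = e_{\abf},
\end{align*}
and then using $\Sbf e_{\bbf} = \Sbf_{:,\bbf}$ and rearranging yields
\begin{align*}
\Sbf_{:,\abf} = e_{\abf} + \sum_{\bbf} \Sbf_{:,\bbf}\Rbf_{\bbf\abf}.
\end{align*}

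The final step is to invoke the hypothesis $\Rbf_{\abf\abf} = O$, which kills the $\bbf = \abf$ term in the sum and leaves exactly the claimed identity. The only real care needed is in the bookkeeping for the block-matrix manipulations, namely making sure that $e_{\abf}$ really does pick out the right column block and that the decomposition $\Rbf e_{\abf} = \sum_{\bbf} e_{\bbf} \Rbf_{\bbf\abf}$ follows from the definition of $\Rbf_{\bbf\abf}$; once that notational layer is clear, the proof is just one application of \lem{R-S} followed by dropping the vanishing diagonal term.
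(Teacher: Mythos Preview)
Your proposal is correct and follows essentially the same route as the paper: both arguments extract the $\abf$-column of the identity $\Sbf(I-\Rbf)=I$ from \lem{R-S} and then use $\Rbf_{\abf\abf}=O$ to drop the diagonal term. The only cosmetic difference is that the paper writes the $\abf$-column equation directly as $\Sbf_{:,\abf}(I-\Rbf_{\abf\abf}) - \sum_{\bbf\neq\abf}\Sbf_{:,\bbf}\Rbf_{\bbf\abf} = e_{\abf}$ before invoking the hypothesis, whereas you keep the full sum and kill the $\bbf=\abf$ term afterward.
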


\begin{proof}
By \lem{R-S}, we have $\Sbf (I -\Rbf) = I$ and hence
\begin{align*}
\Sbf_{:,\abf}(I - \Rbf_{\abf\abf}) - \sum\limits_{\bbf \neq \abf} \Sbf_{:,\bbf}\Rbf_{\bbf\abf} = e_{\abf}.
\end{align*}
The lemma follows as $\Rbf_{\abf\abf} = O$.
\end{proof}

\lem{dependency} can greatly reduce the decision variables when synthesizing a controller. For instance, the synthesized control $\ubf$ is usually a function of other signals except for itself, which implies $\Rbf_{\uu} = O$. Therefore, \lem{dependency} gives
\begin{align}
\Sbf_{:,\ubf} = e_{\ubf} + \sum\limits_{\bbf \neq \ubf} \Sbf_{:,\bbf}\Rbf_{\bbf\ubf}.
\label{eqn:u-column}
\end{align}

\section{Corollaries: Controller Synthesis}\label{sec:controller_synthesis}
We use \lem{R-S} and condition \eqn{R-S-condition} to derive existing controller synthesis proposals, including Youla \cite{youla1976modern2}, input-output \cite{furieri2019input}, system level \cite{wang2019system,anderson2019system}, and mixed parameterizations \cite{zheng2019system}, with different $\Rbf$ and $\Sbf$ structures. We then demonstrate a simpler way to obtain the results in \cite{zheng2020equivalence} using transformations.

\begin{figure}
\centering
\includegraphics[scale=1]{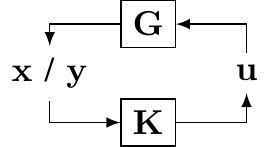}
\caption{The realization with plant $\Gbf$ and controller $\Kbf$. The internal signals include state $\xbf$ (or measurement $\ybf$) and control $\ubf$.}
\label{fig:realization-G-K}
\end{figure}

\subsection{Youla Parametrization}
Youla parameterization is based on the doubly coprime factorization of the plant $\Gbf$. If $\Gbf$ is stabilizable and detectable, we have
\begin{align*}
\mat{
\Mbf_l & -\Nbf_l\\
-\Vbf_l & \Ubf_l
}\mat{
\Ubf_r & \Nbf_r\\
\Vbf_r & \Mbf_r
} = I
\end{align*}
where both matrices are in $\RHinf$, $\Mbf_l$ and $\Mbf_r$ are both invertible in $\RHinf$, and $\Gbf = \Mbf_l^{-1} \Nbf_l = \Nbf_r \Mbf_r^{-1}$ \cite[Theorem 5.6]{zhou1998essentials}.

The following corollary is a modern rewrite of the original Youla parameterization in \cite[Lemma 3]{youla1976modern2} given by \cite[Theorem 11.6]{zhou1998essentials}:
\begin{corollary}
Let the plant $\Gbf$ be doubly coprime factorizable. Given $\Qbf \in \RHinf$, the set of all proper controllers achieving internal stability is parameterized by
\begin{align*}
\Kbf = (\Vbf_r - \Mbf_r \Qbf)(\Ubf_r - \Nbf_r \Qbf)^{-1}.
\end{align*}
\end{corollary}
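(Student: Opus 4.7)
The plan is to apply Lemma~\ref{lem:R-S} to the two-signal plant-controller loop in Fig.~\ref{fig:realization-G-K}. Setting $\etabf = \mat{\xbf \\ \ubf}$ and $\Rbf = \mat{O & \Gbf \\ \Kbf & O}$, internal stability reduces via the realization-stability lemma to $\Sbf = (I-\Rbf)^{-1} \in \RHinf$, with the properness portion of~\eqref{eqn:R-S-condition} collapsing to $\Kbf \in \Rp$ (the only nontrivial off-diagonal blocks of $\Rbf$ are $\Gbf$ and $\Kbf$, and $\Gbf$ is the given plant).

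For sufficiency, I would set $\tilde\Ubf := \Ubf_r - \Nbf_r\Qbf$ and $\tilde\Vbf := \Vbf_r - \Mbf_r\Qbf$, both automatically in $\RHinf$ whenever $\Qbf \in \RHinf$. Using the Bezout identities $\Mbf_l\Ubf_r - \Nbf_l\Vbf_r = I$ and $\Mbf_l\Nbf_r = \Nbf_l\Mbf_r$ extracted from the doubly coprime factorization, one verifies $\Mbf_l(\tilde\Ubf - \Gbf\tilde\Vbf) = I$, so that the sensitivity $(I-\Gbf\Kbf)^{-1} = \tilde\Ubf\Mbf_l$ is in $\RHinf$; the remaining three Gang-of-Four blocks follow by the same short computation, yielding the affine closed form
\begin{align*}
\Sbf = \mat{O & O \\ O & I} + \mat{\tilde\Ubf \\ \tilde\Vbf}\mat{\Mbf_l & \Nbf_l},
\end{align*}
which is manifestly in $\RHinf$.

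For necessity, given any internally stabilizing $\Kbf$, I would choose an $\RHinf$ right-coprime factorization $\Kbf = \tilde\Vbf\tilde\Ubf^{-1}$ and rescale $(\tilde\Ubf,\tilde\Vbf)$ by $(\Mbf_l\tilde\Ubf - \Nbf_l\tilde\Vbf)^{-1}$ so that $\Mbf_l\tilde\Ubf - \Nbf_l\tilde\Vbf = I$; the rescaling is legal precisely because internal stability of the loop forces the matrix $\Mbf_l\tilde\Ubf - \Nbf_l\tilde\Vbf$ to be unimodular in $\RHinf$. Defining $\Qbf := \Vbf_l\tilde\Ubf - \Ubf_l\tilde\Vbf \in \RHinf$ and applying the remaining Bezout identities from both $LR = I$ and $RL = I$ then recovers $\tilde\Ubf = \Ubf_r - \Nbf_r\Qbf$ and $\tilde\Vbf = \Vbf_r - \Mbf_r\Qbf$, so that $\Kbf = (\Vbf_r - \Mbf_r\Qbf)(\Ubf_r - \Nbf_r\Qbf)^{-1}$ as claimed.

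The hard part will be the necessity step: establishing that $\Mbf_l\tilde\Ubf - \Nbf_l\tilde\Vbf$ is unimodular in $\RHinf$ whenever $\Kbf$ stabilizes $\Gbf$ is the classical hook where the stabilization hypothesis does its real work, and it must be in hand before the rescaling is justified. Lemma~\ref{lem:R-S} carries the abstract load by collapsing four separate $\RHinf$-closure checks (one per Gang-of-Four block) into the single inversion $(I-\Rbf)^{-1} \in \RHinf$, while the algebraic bookkeeping through the coprime identities remains the technical core of the argument.
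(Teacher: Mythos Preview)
Your argument is correct but takes the classical textbook route rather than the paper's own framework. For sufficiency, you verify the four Gang-of-Four entries directly from the Bezout identities and assemble them into $\Sbf$; the paper instead exhibits an explicit invertible transformation $\Tbf \in \RHinf$ (built from $\Mbf_l$, $\Ubf_l - \Qbf\Nbf_l$, and the shear by $\Qbf$) that carries the doubly-coprime identity onto $(I-\Rbf)\Sbf = I$, so $\Sbf = \mat{\Ubf_r & \Nbf_r \\ \Vbf_r & \Mbf_r}\Tbf \in \RHinf$ falls out in one line. For necessity, you coprime-factorize $\Kbf$, argue unimodularity of $\Mbf_l\tilde\Ubf - \Nbf_l\tilde\Vbf$, rescale, and then solve for $\Qbf$; the paper sidesteps all of this by reading $\Sbf_{\ux}$ straight out of the stability matrix (which is already in $\RHinf$ by hypothesis) and setting $\Qbf = \Mbf_r^{-1}(\Vbf_r - \Sbf_{\ux}\Mbf_l^{-1})$, which is stable because $\Mbf_l$ and $\Mbf_r$ are invertible in $\RHinf$. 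Your route is self-contained and matches the standard references, but it does not exercise the transformation/equivalent-system machinery of Section~\ref{sec:R-S_lemma-T} that the paper is trying to showcase, and its necessity step is noticeably heavier than the paper's one-line inversion.
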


\begin{proof}
Consider the realization in \fig{realization-G-K}, which has
\begin{align*}
\Rbf = \mat{O & \Gbf \\ \Kbf & O}, \quad \etabf = \mat{\xbf\\ \ubf}.
\end{align*}

To show that all $\Kbf$ can be parameterized by $\Qbf \in \RHinf$, we need to show that each $\Qbf$ is mapped to one valid $\Kbf$ and vice versa. For mapping $\Qbf$ to $\Kbf$, we consider the following transformation:
\begin{align*}
\Tbf^{-1} =&
\mat{
\Mbf_l^{-1} & O\\
O & (\Ubf_l -\Qbf\Nbf_l)^{-1}\\
}
\mat{
I & O\\
\Qbf & I
}\\
\Tbf =&
\mat{
I & O\\
-\Qbf & I
}
\mat{
\Mbf_l & O\\
O & \Ubf_l -\Qbf\Nbf_l \\
}
\end{align*}
As such,
\begin{align*}
I =&\ \Tbf^{-1} \mat{
\Mbf_l & -\Nbf_l\\
-\Vbf_l & \Ubf_l
}\mat{
\Ubf_r & \Nbf_r\\
\Vbf_r & \Mbf_r
}\Tbf\\
=&
\mat{I & -\Gbf \\ -\Kbf & I}
\mat{
(\Ubf_r - \Nbf_r \Qbf) \Mbf_l & \Sbf_{\xu}\\
(\Vbf_r - \Mbf_r \Qbf) \Mbf_l & \Sbf_{\uu}
} = (I -\Rbf)\Sbf
\end{align*}
where $\Sbf_{\xu}$ and $\Sbf_{\uu}$ are given by \eqn{u-column} and $\Gbf = \Mbf_l^{-1}\Nbf_l$:
\begin{align*}
\mat{\Sbf_{\xbf\ubf} \\ \Sbf_{\ubf\ubf}} =&
\mat{O\\ I} + 
\( \mat{\Ubf_r \\ \Vbf_r} -  \mat{\Nbf_r \\ \Mbf_r} \Qbf \) \Mbf_l \Gbf\\
=&
\mat{O\\ I} + 
\( \mat{\Ubf_r \\ \Vbf_r} -  \mat{\Nbf_r \\ \Mbf_r} \Qbf \) \Nbf_l.
\end{align*}

Since $\Tbf \in \RHinf$, we have $\Sbf \in \RHinf$. Therefore,
\begin{align*}
-\Kbf (\Ubf_r - \Nbf_r \Qbf) + (\Vbf_r - \Mbf_r \Qbf) = O,
\end{align*}
which leads to the desired $\Kbf$.

On the other hand, for mapping $\Kbf$ to $\Qbf$, internal stability of $\Kbf$ implies the corresponding $\Sbf_{\ux} \in \RHinf$. We compute $\Qbf$ by
\begin{align*}
\Qbf = \Mbf_r^{-1} (\Vbf_r - \Sbf_{\ux}\Mbf_l^{-1}),
\end{align*}
which is also in $\RHinf$ as $\Mbf_l$ and $\Mbf_r$ are both invertible in $\RHinf$ (i.e., $\Mbf_l^{-1}, \Mbf_r^{-1} \in \RHinf$), and all elements in $\Sbf$ can be expressed in $\Qbf$ using \lem{R-S}.
\end{proof}

\def\IOPqple{\{\Ybf,\Ubf,\Wbf,\Zbf\}}
\subsection{Input-Output Parametrization (IOP)}
Inspired by the system level approach in \cite{wang2019system}, \cite{furieri2019input} revisits the input-output system studied by Youla parameterization and proposes IOP as follows that does not depend on the doubly coprime factorization \cite[Theorem 1]{furieri2019input}.

\begin{corollary}
For the realization in \fig{realization-G-K} with $\Gbf \in \Rsp$, the set of all proper internally stabilizing controller is parameterized by $\IOPqple$ that lies in the affine subspace defined by the equations
\begin{align*}
\mat{I & -\Gbf}
\mat{\Ybf & \Wbf\\ \Ubf & \Zbf}
=&
\mat{I & O},\\
\mat{\Ybf & \Wbf\\ \Ubf & \Zbf}
\mat{-\Gbf\\ I}
=&
\mat{O\\I},\\
\Ybf, \Ubf, \Wbf, \Zbf \in&\ \RHinf,
\end{align*}
and the controller is given by $\Kbf = \Ubf \Ybf^{-1}$.
\end{corollary}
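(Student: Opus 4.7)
The plan is to apply \lem{R-S} directly to the realization in \fig{realization-G-K}, where $\Rbf = \mat{O & \Gbf \\ \Kbf & O}$ and the stability matrix, in $2\times 2$ block form, is $\Sbf = \mat{\Ybf & \Wbf \\ \Ubf & \Zbf}$ with $\Ybf = \Sbf_{\ybf\ybf}$, $\Wbf = \Sbf_{\ybf\ubf}$, $\Ubf = \Sbf_{\ubf\ybf}$, $\Zbf = \Sbf_{\ubf\ubf}$. Expanding $(I-\Rbf)\Sbf = I$ block by block, the top two rows are exactly the first affine condition $\mat{I & -\Gbf}\Sbf = \mat{I & O}$, and expanding $\Sbf(I-\Rbf) = I$ the right two columns give the second condition $\Sbf \mat{-\Gbf \\ I} = \mat{O \\ I}$. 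Internal stability, by \eqn{R-S-condition}, is precisely $\Sbf \in \RHinf$, i.e., $\Ybf, \Ubf, \Wbf, \Zbf \in \RHinf$. So every proper internally stabilizing $\Kbf$ induces a feasible tuple, and $\Kbf = \Ubf \Ybf^{-1}$ falls out of the bottom-left identity $-\Kbf \Ybf + \Ubf = O$ (assuming $\Ybf$ is invertible, addressed below).

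For the converse, I start with an arbitrary feasible tuple and define $\Kbf := \Ubf \Ybf^{-1}$. The crucial first step is to argue $\Ybf^{-1}$ exists as a proper transfer matrix: the relation $\Ybf = I + \Gbf \Ubf$ from the first affine condition, combined with $\Gbf \in \Rsp$, makes $\Gbf \Ubf$ strictly proper, so $\Ybf$ is invertible with proper inverse, and hence $\Kbf$ is proper. It then remains to check that $\mat{\Ybf & \Wbf \\ \Ubf & \Zbf}$ really is the stability matrix of the closed loop with this $\Kbf$. Computing $(I-\Rbf)\Sbf$ block by block, the top row collapses by the first affine condition; the bottom-left entry $-\Kbf\Ybf + \Ubf$ vanishes by construction of $\Kbf$; and the bottom-right reduces via $\Wbf = \Ybf \Gbf$ and $\Zbf - \Ubf \Gbf = I$ (both from the second affine condition) to $-\Ubf\Gbf + \Zbf = I$. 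Since $(I-\Rbf)\Sbf = I$ with both matrices square, \lem{R-S} gives $\Sbf(I-\Rbf) = I$ automatically, so the tuple is the unique stability matrix of the closed loop and $\Sbf \in \RHinf$ certifies internal stability.

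The main obstacle is the invertibility of $\Ybf$, which is exactly where the hypothesis $\Gbf \in \Rsp$ enters essentially; without strict properness, the affine constraints alone do not force $\Ybf^{-1}$ to be proper, and $\Kbf = \Ubf \Ybf^{-1}$ might fail to be a well-defined proper controller. The remaining block verifications are mechanical, but one should take care to cross-use both affine conditions at the right moments (e.g., invoking $\Wbf = \Ybf\Gbf$ from the second condition in the bottom-right check rather than the equivalent $\Wbf = \Gbf\Zbf$ from the first), so that each constraint visibly earns its place in the argument.
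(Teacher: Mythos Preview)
Your proposal is correct and follows essentially the same route as the paper: identify $\Rbf = \mat{O & \Gbf \\ \Kbf & O}$, read off the affine constraints and $\Sbf\in\RHinf$ from \lem{R-S} and \eqn{R-S-condition}, and for the converse recover $\Kbf = \Ubf\Ybf^{-1}$ using the strict properness of $\Gbf$ to guarantee $\Ybf^{-1}\in\Rp$. If anything, your converse is slightly more explicit than the paper's, since you verify block by block that the defined $\Kbf$ makes the given tuple the actual stability matrix, whereas the paper leaves that check implicit.
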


\begin{proof}
We can write down the realization matrix in~\fig{realization-G-K}:
\begin{align*}
\Rbf = \mat{O & \Gbf \\ \Kbf & O}, \quad \etabf = \mat{\ybf \\ \ubf}.
\end{align*}

Given $\Kbf$, the derivation of $\IOPqple$ is a direct consequence of \lem{R-S} and condition \eqn{R-S-condition}, which suggest
\begin{align}
I =&\ (I-\Rbf)\Sbf = \mat{I & -\Gbf \\ -\Kbf & I}
\mat{\Ybf & \Wbf\\ \Ubf & \Zbf} \label{eqn:IOP}\\
=&\ \Sbf(I-\Rbf) = 
\mat{\Ybf & \Wbf\\ \Ubf & \Zbf}\mat{I & -\Gbf \\ -\Kbf & I}, \nonumber\\
&\ \Ybf, \Ubf, \Wbf, \Zbf \in \RHinf. \nonumber
\end{align}

Conversely, given $\IOPqple$, \eqn{IOP} implies
\begin{align*}
\Ubf = \Kbf \Ybf \quad\Rightarrow\quad 
\Kbf = \Ubf \Ybf^{-1}.
\end{align*}
We need to verify that $\Ybf$ is invertible in $\Rp$ so that $\Kbf \in \Rp$. Given $\Gbf \in \Rsp$, we know that
\begin{align*}
\Ybf = I + \Gbf \Ubf = I + (zI-\Lambda)^{-1} \Jbf.
\end{align*}
for some matrix $\Lambda$ and $\Jbf \in \Rp$. As a result,
\begin{align*}
\Ybf^{-1} = I + \sum\limits_{k\geq 1}^{\infty} (zI-\Lambda)^{-k}\Jbf^k \in \Rp,
\end{align*}
which concludes the proof.
\end{proof}

\subsection{System Level Parametrization/Synthesis (SLP/SLS)}
System level synthesis (SLS) uses system level parameterization (SLP) to parameterize internally stabilizing controllers. There are two SLPs: for state-feedback and output-feedback systems, respectively. We discuss them below.

\begin{figure}
\centering
\includegraphics[scale=1]{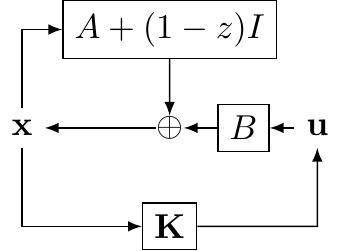}
\caption{The realization of a state-feedback system with controller $\Kbf$. The internal signals are state $\xbf$ and control $\ubf$.}
\label{fig:realization-state-feedback}
\end{figure}

\renewcommand{\paragraph}[1]{\vspace*{0.5\baselineskip}\noindent\textbf{#1}}

\paragraph{State-Feedback:} The following state-feedback parameterization is given in \cite[Theorem 1]{wang2019system} and \cite[Theorem 4.1]{anderson2019system}.

\begin{corollary}\label{cor:sf-SLS}
For the realization in~\fig{realization-state-feedback}, the set of all proper internally stabilizing state-feedback controller is parameterized by $\SFpair$ that lies in the affine space defined by
\begin{align*}
\mat{zI-A & -B}\mat{\Phibf_{\xbf}\\ \Phibf_{\ubf}} = I,\\
\Phibf_{\xbf}, \Phibf_{\ubf}\in z^{-1}\RHinf,
\end{align*}
and the controller is given by $\Kbf = \Phibf_{\ubf} \Phibf_{\xbf}^{-1}$.
\end{corollary}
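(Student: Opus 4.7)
The plan is to instantiate \lem{R-S} together with the disturbance transformation of \sec{R-S_lemma-T} on the state-feedback realization of \fig{realization-state-feedback}. Reading off the underlying discrete-time dynamics, the realization matrix is $\Rbf = \mat{z^{-1}A & z^{-1}B \\ \Kbf & O}$ with $\etabf = \mat{\xbf \\ \ubf}$. Because the SLS disturbance $\wbf$ is injected before the delay, the additive disturbance on $\xbf$ is $\dbf_\xbf = z^{-1}\wbf$, which is the transformation $\Tbf = \mat{z^{-1}I & O \\ O & I}$. This yields the equivalent realization $I - \Rbf_{eq} = \Tbf^{-1}(I-\Rbf) = \mat{zI-A & -B \\ -\Kbf & I}$, and identifies $\mat{\Phibf_\xbf \\ \Phibf_\ubf}$ with the first block column of $\Sbf_{eq} = \Sbf \Tbf$, i.e., $\Phibf_\xbf = z^{-1}\Sbf_{\xbf\xbf}$ and $\Phibf_\ubf = z^{-1}\Sbf_{\ubf\xbf}$.

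For the forward direction, if $\Kbf$ is proper and internally stabilizing then $\Sbf \in \RHinf$. Applying \lem{R-S} to the equivalent system, $(I-\Rbf_{eq})\Sbf_{eq} = I$, and reading off the first block column yields both $(zI-A)\Phibf_\xbf - B\Phibf_\ubf = I$ and $\Phibf_\ubf = \Kbf\Phibf_\xbf$. Stability of $\Sbf$, combined with the $z^{-1}$ factor supplied by $\Tbf$, gives $\Phibf_\xbf, \Phibf_\ubf \in z^{-1}\RHinf$.

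For the reverse direction, given $\SFpair$ in the affine subspace, I define $\Kbf = \Phibf_\ubf\Phibf_\xbf^{-1}$ and must check (i) $\Kbf \in \Rp$ and (ii) the induced $\Sbf \in \RHinf$. Part (ii) is routine: the $\xbf$-column of $\Sbf$ is $\mat{z\Phibf_\xbf \\ z\Phibf_\ubf} \in \RHinf$ by hypothesis, and \lem{dependency} applied with $\Rbf_{\uu} = O$ produces the $\ubf$-column for free, namely $\Sbf_{:,\ubf} = e_\ubf + \Sbf_{:,\xbf}\cdot z^{-1}B = \mat{\Phibf_\xbf B \\ I + \Phibf_\ubf B} \in \RHinf$.

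The main obstacle is (i), the properness of $\Kbf$, which mirrors the $\Ybf^{-1} \in \Rp$ step of the IOP proof. Letting $\tilde{\Phibf}_\xbf = z\Phibf_\xbf$ and $\tilde{\Phibf}_\ubf = z\Phibf_\ubf$, both in $\RHinf$, the affine constraint rearranges to $\tilde{\Phibf}_\xbf = I + z^{-1}(A\tilde{\Phibf}_\xbf + B\tilde{\Phibf}_\ubf)$, exhibiting $\tilde{\Phibf}_\xbf$ as $I$ plus a strictly proper piece. A Neumann-series expansion then gives $\tilde{\Phibf}_\xbf^{-1} \in \Rp$, whence $\Kbf = \Phibf_\ubf\Phibf_\xbf^{-1} = \tilde{\Phibf}_\ubf\tilde{\Phibf}_\xbf^{-1} \in \Rp$, completing the argument.
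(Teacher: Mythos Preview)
Your argument is correct and rests on the same two ingredients as the paper: \lem{R-S} applied to the state-feedback realization to obtain the affine constraint, and the column-dependency relation \eqn{u-column} to fill in the $\ubf$-column of $\Sbf$. The packaging differs slightly. The paper places the SLS disturbance directly on $\xbf$, accepting a non-proper diagonal block $\Rbf_{\xx} = A+(1-z)I$ so that $I-\Rbf$ already has the $(zI-A)$ form; it then argues strict properness of $\Phibf_\xbf$ from $(zI-A)\Phibf_\xbf \in \RHinf$ and of $\Phibf_\ubf$ from $\Phibf_\ubf = \Kbf\Phibf_\xbf$. You instead start from the fully proper realization $\Rbf = \mat{z^{-1}A & z^{-1}B \\ \Kbf & O}$ and reach the same $I-\Rbf_{eq}$ via the transformation of \sec{R-S_lemma-T}; the payoff is that the $z^{-1}$ in $\Tbf$ delivers $\Phibf_\xbf,\Phibf_\ubf \in z^{-1}\RHinf$ immediately in the forward direction, with no separate strict-properness argument needed. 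In the reverse direction you also supply a step the paper leaves implicit: the explicit Neumann-series check that $\tilde\Phibf_\xbf = z\Phibf_\xbf = I + z^{-1}\Jbf$ has a proper inverse, hence $\Kbf = \tilde\Phibf_\ubf\tilde\Phibf_\xbf^{-1} \in \Rp$. This mirrors the $\Ybf^{-1}\in\Rp$ step in the IOP proof and is a welcome addition.
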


\begin{proof}
The realization matrix in~\fig{realization-state-feedback} is
\begin{align}
\Rbf = \mat{
A + (1-z)I & B \\
\Kbf & O
}, \quad \etabf = \mat{\xbf\\ \ubf}.
\label{eqn:sf-SLS-R}
\end{align}

To derive $\Kbf$ from $\SFpair$, \lem{R-S} and condition \eqn{R-S-condition} lead to
\begin{align*}
\mat{
zI-A & -B \\
-\Kbf & I}&
\mat{\Phibf_{\xbf} & \Sbf_{\xu} \\ 
\Phibf_{\ubf} & \Sbf_{\uu}}
= I,\\
\Kbf \in \Rp,\quad &\Phibf_{\xbf}, \Phibf_{\ubf} \in \RHinf.
\end{align*}
Meanwhile, since
\begin{align*}
(zI-A) \Phibf_{\xbf} = I + B \Phibf_{\ubf} \in \RHinf,
\end{align*}
we have $\Phibf_{\xbf} \in z^{-1}\RHinf$. As a result, given $\Kbf \in \Rp$,
\begin{align*}
\Phibf_{\ubf} = \Kbf \Phibf_{\xbf} = z^{-1} \Kbf (z\Phibf_{\xbf}) \in z^{-1}\Rp,
\end{align*}
we know $\Phibf_{\ubf} \in z^{-1}\Rp \cap \RHinf = z^{-1}\RHinf$.

Conversely, given $\SFpair$, we can derive $\Kbf = \Phibf_{\ubf} \Phibf_{\xbf}^{-1}$ from \lem{R-S}. It remains to show that $\Sbf_{\xu}$ and $\Sbf_{\uu}$ exist whenever $\SFpair$ is given. According to \eqn{u-column}
\begin{align*}
\mat{\Sbf_{\xu}\\ \Sbf_{\uu}} = 
\mat{O\\I} + \mat{\Phibf_{\xbf}\\ \Phibf_{\ubf}}B \in \RHinf,
\end{align*}
which concludes the proof.
\end{proof}

\begin{figure}
\centering
\includegraphics[scale=1]{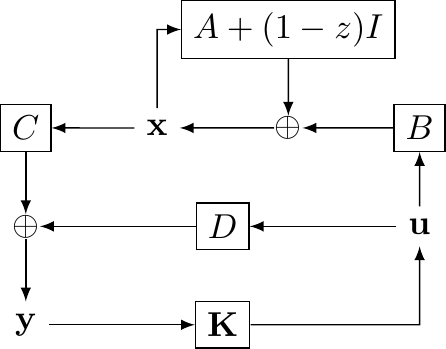}
\caption{The realization of an output-feedback system with controller $\Kbf$. The internal state $\etabf$ consists of state $\xbf$, control $\ubf$, and measurement $\ybf$ signals.}
\label{fig:realization-output-feedback}
\end{figure}

\paragraph{Output-Feedback:} The output-feedback SLP below is from \cite[Theorem 2]{wang2019system} and \cite[Theorem 5.1]{anderson2019system}.

\begin{corollary}\label{cor:of-SLS-zero-D}
For the realization in~\fig{realization-output-feedback} with $D = O$, the set of all proper internally stabilizing output-feedback controller is parameterized by $\OFqple$ that lies in the affine space defined by
\begin{align*}
\mat{
zI - A & -B
}
\mat{
\Phibf_{\xx} & \Phibf_{\xy}\\
\Phibf_{\ux} & \Phibf_{\uy}
}
=&
\mat{I & O},\\
\mat{
\Phibf_{\xx} & \Phibf_{\xy}\\
\Phibf_{\ux} & \Phibf_{\uy}
}
\mat{
zI - A \\ -C
}
=&
\mat{I \\ O},\\
\Phibf_{\xx}, \Phibf_{\xy}, \Phibf_{\ux} \in z^{-1}\RHinf,&\ \quad \Phibf_{\uy} \in \RHinf,
\end{align*}
and the controller is given by
\begin{align*}
\Kbf = \Phibf_{\uy} - \Phibf_{\ux} \Phibf_{\xx}^{-1} \Phibf_{\xy}.
\end{align*}

\end{corollary}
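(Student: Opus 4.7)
The plan is to follow the template of \cor{sf-SLS}, extended to include the measurement signal $\ybf$. I would first write the realization for \fig{realization-output-feedback} with $D=O$ using $\etabf = \mat{\xbf\\ \ubf\\ \ybf}$, giving
\begin{equation*}
I - \Rbf = \mat{zI-A & -B & O\\ O & I & -\Kbf\\ -C & O & I},
\end{equation*}
and denote the nine blocks of $\Sbf$ in the natural way. For the forward direction, \lem{R-S} with condition \eqn{R-S-condition} supplies $\Sbf \in \RHinf$, so in particular $\Phibf_{\xx},\Phibf_{\xy},\Phibf_{\ux},\Phibf_{\uy}\in\RHinf$; the first affine equation in the statement is the $\xbf$-row of $(I-\Rbf)\Sbf = I$ restricted to the $\xbf$- and $\ybf$-columns, while the second is the $\xbf$-column of $\Sbf(I-\Rbf) = I$ restricted to the $\xbf$- and $\ubf$-rows. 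Promoting $\Phibf_{\xx}, \Phibf_{\xy}, \Phibf_{\ux}$ to $z^{-1}\RHinf$ mimics the \cor{sf-SLS} argument applied to the identities $(zI-A)\Phibf_{\xx} = I + B\Phibf_{\ux}$, $(zI-A)\Phibf_{\xy} = B\Phibf_{\uy}$, and $\Phibf_{\ux}(zI-A) = \Phibf_{\uy}C$, whose right-hand sides all live in $\RHinf$.

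For the converse, I would use \lem{dependency} to extend the four given blocks to a full $\Sbf \in \RHinf$. Since $\Rbf_{\uu} = O$ and $\Rbf_{\yy} = O$, applying the lemma along the $\ubf$-column yields $\Phibf_{\xu} = \Phibf_{\xx}B$, $\Phibf_{\uu} = I + \Phibf_{\ux}B$, $\Phibf_{\yu} = \Phibf_{\yx}B$, while the $\ybf$-row of $(I-\Rbf)\Sbf = I$ gives $\Phibf_{\yx} = C\Phibf_{\xx}$ and $\Phibf_{\yy} = I + C\Phibf_{\xy}$; $\RHinf$ membership of every entry then follows because $A,B,C$ are constant matrices. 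For the controller formula, applying \lem{dependency} along the $\ybf$-column (where $\Rbf_{\uy}=\Kbf$ is the only nonzero entry) produces $\Phibf_{\xy} = \Phibf_{\xu}\Kbf = \Phibf_{\xx}B\Kbf$ and $\Phibf_{\uy} = \Phibf_{\uu}\Kbf = (I+\Phibf_{\ux}B)\Kbf$. Substituting $B\Kbf = \Phibf_{\xx}^{-1}\Phibf_{\xy}$ from the first into the second then yields $\Kbf = \Phibf_{\uy} - \Phibf_{\ux}\Phibf_{\xx}^{-1}\Phibf_{\xy}$.

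The main obstacle I anticipate is justifying the properness of this $\Kbf$, because $\Phibf_{\xx} \in z^{-1}\RHinf$ makes $\Phibf_{\xx}^{-1}$ non-proper in general. I would rewrite $\Phibf_{\xx}^{-1} = (I+B\Phibf_{\ux})^{-1}(zI-A)$ using the affine identity, establish $(I+B\Phibf_{\ux})^{-1} \in \Rp$ by a Neumann-series argument (as in the IOP corollary) since $B\Phibf_{\ux}$ is strictly proper, and then combine with $(zI-A)\Phibf_{\xy} = B\Phibf_{\uy}$ to telescope the troublesome product into $\Phibf_{\ux}\Phibf_{\xx}^{-1}\Phibf_{\xy} = \Phibf_{\ux}(I+B\Phibf_{\ux})^{-1}B\Phibf_{\uy}$, which is manifestly proper.
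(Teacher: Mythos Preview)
Your proposal is correct and the forward direction matches the paper almost verbatim. The converse direction, however, is organized differently. The paper actually proves the general-$D$ extension (\cor{of-SLS}) and specializes; to extract $\Kbf$ it left-multiplies $(I-\Rbf)\Sbf = I$ by
\[
\Gamma = \mat{I & B & O\\ O & I & O\\ O & D & I}
\]
to collapse the system to the $2\times 2$ block
\[
\mat{zI-A & -B\Kbf\\ -C & I-D\Kbf}\mat{\Phibf_{\xx} & \Phibf_{\xy}\\ \Sbf_{\yx} & \Sbf_{\yy}} = I,
\]
reads off $I - D\Kbf$ as a Schur complement, and then uses $\Phibf_{\ux} = \Kbf\Sbf_{\yx}$, $\Phibf_{\uy} = \Kbf\Sbf_{\yy}$ to identify $\Kbf_0$. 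Your route instead exploits \lem{dependency} twice (along the $\ubf$- and $\ybf$-columns) to obtain $\Phibf_{\xy} = \Phibf_{\xx}B\Kbf$ and $\Phibf_{\uy} = (I+\Phibf_{\ux}B)\Kbf$ directly, then eliminates $B\Kbf$. This is more elementary and transparent for $D=O$, whereas the paper's $\Gamma$-trick is what makes the general-$D$ formula $\Kbf = \Kbf_0(I+D\Kbf_0)^{-1}$ fall out cleanly. Your explicit verification that $\Kbf\in\Rp$ via $\Phibf_{\ux}\Phibf_{\xx}^{-1}\Phibf_{\xy} = \Phibf_{\ux}(I+B\Phibf_{\ux})^{-1}B\Phibf_{\uy}$ is a point the paper leaves implicit.
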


In fact, we can extend \cor{of-SLS-zero-D} to general $D$.
\begin{corollary}\label{cor:of-SLS}
Given $\OFqple$ that lies in the affine space in \cor{of-SLS-zero-D} and an arbitrary $D$, the proper internally stabilizing output-feedback controller $\Kbf$ is given by
\begin{align*}
\Kbf = \Kbf_0 \left( I + D \Kbf_0 \right)^{-1}
\end{align*}
where
$\Kbf_0 = \Phibf_{\uy} - \Phibf_{\ux} \Phibf_{\xx}^{-1} \Phibf_{\xy}$.
\end{corollary}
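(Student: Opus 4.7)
The plan is to reduce the general-$D$ case to the $D=O$ case of \cor{of-SLS-zero-D} by passing to an auxiliary zero-feedthrough system with a modified controller $\Kbf_0$, and then solve algebraically for $\Kbf$. Concretely, introduce the new measurement $\tilde{\ybf} = \ybf - D\ubf$, so that $\tilde{\ybf} = C\xbf + \dbf_\ybf$ obeys exactly the zero-$D$ output equation. In terms of $\tilde{\ybf}$, the control law $\ubf = \Kbf\ybf$ becomes $(I - \Kbf D)\ubf = \Kbf\tilde{\ybf}$, which, assuming well-posedness, reads $\ubf = \Kbf_0 \tilde{\ybf}$ with $\Kbf_0 = (I - \Kbf D)^{-1}\Kbf$.

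Combining the signal change of basis $\ybf \mapsto \tilde{\ybf}$ on $\etabf$ with the row rescaling by $(I - \Kbf D)^{-1}$ on the $\ubf$-equation, the realization $(I-\Rbf)$ of the general-$D$ loop transforms into $(I-\Rbf_0)$ for the $D=O$ loop closed with $\Kbf_0$. By the equivalent-system observation in \sec{R-S_lemma-T}, this transformation preserves internal stability, so $\Sbf \in \RHinf$ holds for the original loop iff the corresponding $\Sbf_0 \in \RHinf$ holds for the auxiliary one. Applying \cor{of-SLS-zero-D} to the auxiliary system yields $\Kbf_0 = \Phibf_{\uy} - \Phibf_{\ux}\Phibf_{\xx}^{-1}\Phibf_{\xy}$, and inverting the relation $\Kbf_0 = (I - \Kbf D)^{-1}\Kbf$ gives $\Kbf = \Kbf_0(I + D\Kbf_0)^{-1}$ via the push-through identity, which is the claim.

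The main obstacle is well-posedness: one must verify that $I + D\Kbf_0$ is invertible as a proper transfer matrix so that $\Kbf \in \Rp$. Properness of $\Kbf_0$ itself follows because $\Phibf_{\ux}, \Phibf_{\xy} \in z^{-1}\RHinf$ supply enough strict properness to offset the possible improperness of $\Phibf_{\xx}^{-1}$, making the product $\Phibf_{\ux}\Phibf_{\xx}^{-1}\Phibf_{\xy}$ proper. Invertibility of $I + D\Kbf_0$ then reduces at $z\to\infty$ to a constant algebraic condition, which is the standard well-posedness requirement for feedback loops with direct feedthrough and is implicit in the corollary's statement. Once this is granted, the bijection $\Rbf \leftrightarrow \Sbf$ from \lem{R-S} transfers the stability certificate from the auxiliary loop back to the original $D$-feedthrough loop, finishing the argument.
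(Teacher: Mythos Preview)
Your reduction to the $D=O$ case via $\tilde{\ybf}=\ybf-D\ubf$ is a genuinely different route from the paper's. The paper never passes to an auxiliary system: it works directly with the general-$D$ realization, left-multiplies $(I-\Rbf)\Sbf=I$ by the constant matrix $\Gamma=\mat{I&B&O\\O&I&O\\O&D&I}$ to collapse the $3\times3$ identity to a $2\times2$ relation in the $(\xbf,\ybf)$ block, reads off $I-D\Kbf$ as the inverse of a Schur complement, and then algebraically isolates $\Kbf$. Your approach is more modular (you get to reuse \cor{of-SLS-zero-D} as a black box), while the paper's is a single direct computation.

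That said, your stability-preservation step has a real gap. The transformation you describe is a change of basis $P$ on $\etabf$ (constant, hence harmless) composed with a row rescaling $M=\mathrm{diag}\bigl(I,(I-\Kbf D)^{-1},I\bigr)$ on the $\ubf$-equation; concretely $\Sbf=P^{-1}\Sbf_0 M$ with $(I-\Kbf D)^{-1}=I+\Kbf_0 D$. Since $\Kbf_0$ is only proper and not stable in general, $M\notin\RHinf$, so the equivalent-system argument of \sec{R-S_lemma-T} does not by itself give $\Sbf\in\RHinf$ from $\Sbf_0\in\RHinf$. The fix is to note that $M$ only affects the $\ubf$-column: the $\xbf$- and $\ybf$-columns of $\Sbf$ equal $P^{-1}$ times the corresponding columns of $\Sbf_0$ and hence lie in $\RHinf$, and then \lem{dependency} gives $\Sbf_{:,\ubf}=e_\ubf+\Sbf_{:,\xbf}B+\Sbf_{:,\ybf}D\in\RHinf$. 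This is exactly how the paper closes its own argument, so once you add this step the two proofs converge. You should also state explicitly why the auxiliary loop's SLP parameters coincide with the given $\OFqple$: the map $\Sbf_0=P\Sbf M^{-1}$ only alters the $\ybf$-row and the $\ubf$-column, leaving the $(\xbf,\ubf)\times(\xbf,\ybf)$ block untouched.
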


We prove the more general version -- \cor{of-SLS} -- below.

\begin{proof}
The realization matrix in~\fig{realization-output-feedback} is
\begin{align*}
\Rbf = \mat{
A + (1-z)I & B & O \\
O & O & \Kbf \\
C & D & O
}, \quad \etabf = \mat{\xbf\\ \ubf\\ \ybf}.
\end{align*}

Given $\Kbf$ we can directly derive $\OFqple$ from \lem{R-S}
\begin{align}
I =& \mat{
zI-A & -B & O \\
O & I & -\Kbf \\
-C & -D & I
}
\mat{
\Phibf_{\xx} & \Sbf_{\xu} & \Phibf_{\xy} \\
\Phibf_{\ux} & \Sbf_{\uu} & \Phibf_{\uy} \\
\Sbf_{\yx} & \Sbf_{\yu} & \Sbf_{\yy}
} \label{eqn:of-SLS}\\
=& \mat{
\Phibf_{\xx} & \Sbf_{\xu} & \Phibf_{\xy} \\
\Phibf_{\ux} & \Sbf_{\uu} & \Phibf_{\uy} \\
\Sbf_{\yx} & \Sbf_{\yu} & \Sbf_{\yy}
}
\mat{
zI-A & -B & O \\
O & I & -\Kbf \\
-C & -D & I
},\nonumber
\end{align}
where $\Sbf \in \RHinf$ by condition \eqn{R-S-condition}. As a result, we have
\begin{align*}
(zI-A)\Phibf_{\xx} =&\ I + B\Phibf_{\ux} \in \RHinf,\\
(zI-A)\Phibf_{\xy} =&\ B\Phibf_{\uy} \in \RHinf,\\
\Phibf_{\ux}(zI-A) =&\ \Phibf_{\uy}C \in \RHinf.
\end{align*}
Therefore, $\Phibf_{\xx},\Phibf_{\ux},\Phibf_{\xy} \in z^{-1}\RHinf$ and $\Phibf_{\uy} \in \RHinf$. 

Conversely, we can derive $\Kbf$ from $\OFqple$ as follows. First, we multiply the matrix
\begin{align*}
\Gamma = \mat{
I & B & O\\
O & I & O\\
O & D & I
} 
\end{align*}
at the left of both sides of \eqn{of-SLS}, which leads to
\begin{align*}
\mat{
zI-A & -B \Kbf \\
-C & I - D \Kbf
}
\mat{
\Phibf_{\xx} & \Phibf_{\xy} \\
\Sbf_{\yx} & \Sbf_{\yy}
} = I.
\end{align*}
Therefore, as $\Phibf_{\xx}$ and $\Sbf_{\yy}$ are both square, taking matrix inverse, we have
\begin{align*}
I - D \Kbf = \left( \Sbf_{\yy} -  \Sbf_{\yx} \Phibf_{\xx}^{-1} \Phibf_{\xy} \right)^{-1}.
\end{align*}
Since $\Phibf_{\ux} = \Kbf \Sbf_{\yx}$ and $\Phibf_{\uy} = \Kbf \Sbf_{\yy}$,
we know
\begin{align*}
\Kbf_0 = \Kbf \left( \Sbf_{\yy} -  \Sbf_{\yx} \Phibf_{\xx}^{-1} \Phibf_{\xy} \right)
\end{align*}
and we can rearrange the equation to obtain
\begin{align*}
\Kbf_0 - \Kbf D \Kbf_0 = \Kbf
\quad \Rightarrow \quad
\Kbf = \Kbf_0 \left( I + D \Kbf_0 \right)^{-1}.
\end{align*}

The last thing we need to verify is that $\Sbf$ exists and is in $\RHinf$. By \lem{R-S}, we know
\begin{align}
\Sbf_{\yx} =&\ C \Phibf_{\xx} + D \Phibf_{\ux} \in \RHinf, \nonumber \\
\Sbf_{\yy} =&\ C \Phibf_{\xy} + D \Phibf_{\uy} + I \in \RHinf. 
\label{eqn:of-SLS-S-1}
\end{align}
and we can compute the rest by \eqn{u-column}
\begin{align}
\mat{\Sbf_{\xu}\\ \Sbf_{\uu}\\ \Sbf_{\yu}} = 
\mat{O\\I\\O} + 
\mat{\Phibf_{\xx}\\ \Phibf_{\ux}\\ \Sbf_{\yx}}B +
\mat{\Phibf_{\xy}\\ \Phibf_{\uy}\\ \Sbf_{\yy}}D \in \RHinf,
\label{eqn:of-SLS-S-2}
\end{align}
which concludes the proof.
\end{proof}

\subsection{Mixed Parameterizations}
Letting $\Gbf = C(zI-A)^{-1}B + D$, \cite[Proposition 3, Proposition 4]{zheng2019system} provides the following corollaries that have conditions in both SLP and IOP flavors.

\begin{corollary}\label{cor:mixed-1}
For the realization in~\fig{realization-output-feedback}, the set of all proper internally stabilizing output-feedback controller is parameterized by $\{\Phibf_{\yx},\Phibf_{\ux},\Phibf_{\yy},\Phibf_{\uy}\}$ that lies in the affine space defined by
\begin{align*}
\mat{
I & -\Gbf
}
\mat{
\Phibf_{\yx} & \Phibf_{\yy}\\
\Phibf_{\ux} & \Phibf_{\uy}
}
=&
\mat{C(zI-A)^{-1} & I},\\
\mat{
\Phibf_{\yx} & \Phibf_{\yy}\\
\Phibf_{\ux} & \Phibf_{\uy}
}
\mat{
zI - A \\ -C
}
=&
O,\\
\Phibf_{\yx},\Phibf_{\ux},\Phibf_{\yy}&,\Phibf_{\uy} \in \RHinf,
\end{align*}
and the controller is given by
\begin{align*}
\Kbf = \Phibf_{\uy}\Phibf_{\yy}^{-1}.
\end{align*}
\end{corollary}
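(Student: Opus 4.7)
The plan is to derive \cor{mixed-1} as a direct consequence of \lem{R-S} applied to the same realization matrix as in the proof of \cor{of-SLS}, identifying $\{\Phibf_{\yx},\Phibf_{\ux},\Phibf_{\yy},\Phibf_{\uy}\}$ with the corresponding sub-blocks $\{\Sbf_{\yx},\Sbf_{\ux},\Sbf_{\yy},\Sbf_{\uy}\}$ of $\Sbf$. For the forward direction, I would expand $(I-\Rbf)\Sbf=I$: the $\xbf$-row gives $\Phibf_{\xx}=(zI-A)^{-1}(I+B\Phibf_{\ux})$ and $\Phibf_{\xy}=(zI-A)^{-1}B\Phibf_{\uy}$, while the $\ybf$-row gives $\Phibf_{\yx}=C\Phibf_{\xx}+D\Phibf_{\ux}$ and $\Phibf_{\yy}=I+C\Phibf_{\xy}+D\Phibf_{\uy}$. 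Substituting the $\xbf$-row expressions into the $\ybf$-row ones and collecting the $\Gbf=C(zI-A)^{-1}B+D$ term yields the first affine constraint. The second affine constraint drops directly out of the $\xbf$-column of $\Sbf(I-\Rbf)=I$, which gives $\Phibf_{\yx}(zI-A)=\Phibf_{\yy}C$ and $\Phibf_{\ux}(zI-A)=\Phibf_{\uy}C$. Membership in $\RHinf$ is inherited from $\Sbf\in\RHinf$, and the $\ubf$-$\ybf$ block of $(I-\Rbf)\Sbf=I$ forces $\Phibf_{\uy}=\Kbf\Phibf_{\yy}$, so $\Kbf=\Phibf_{\uy}\Phibf_{\yy}^{-1}$.

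For the converse, given the four $\Phi$'s in $\RHinf$ satisfying the two affine constraints, I would define the auxiliary parameters $\Phibf_{\xx}:=(zI-A)^{-1}(I+B\Phibf_{\ux})$ and $\Phibf_{\xy}:=(zI-A)^{-1}B\Phibf_{\uy}$, and verify that the extended quadruple $\OFqple$ satisfies the affine-subspace conditions of \cor{of-SLS}. Both affine constraints of \cor{mixed-1} translate cleanly: the first expands to $\Phibf_{\yx}=C\Phibf_{\xx}+D\Phibf_{\ux}$ and $\Phibf_{\yy}=I+C\Phibf_{\xy}+D\Phibf_{\uy}$ (matching \eqn{of-SLS-S-1}), while the second already contains the $\Phibf_{\ux}(zI-A)=\Phibf_{\uy}C$ identity that appears in \cor{of-SLS}. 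Invoking \cor{of-SLS} then produces a proper internally stabilizing $\Kbf$, and the $\ubf$-$\ybf$ identification forces $\Kbf$ to equal $\Phibf_{\uy}\Phibf_{\yy}^{-1}$.

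The main obstacle will be verifying the strict-properness requirements $\Phibf_{\xx},\Phibf_{\xy},\Phibf_{\ux}\in z^{-1}\RHinf$ demanded by \cor{of-SLS} from the weaker hypothesis that the four mixed parameters lie only in $\RHinf$. The reason this works is that the first affine constraint, read as $(zI-A)\Phibf_{\xx}=I+B\Phibf_{\ux}\in\RHinf$ and $(zI-A)\Phibf_{\xy}=B\Phibf_{\uy}\in\RHinf$, forces the $z^{-1}$ factor on $\Phibf_{\xx}$ and $\Phibf_{\xy}$; the second constraint $\Phibf_{\ux}(zI-A)=\Phibf_{\uy}C$ does the same for $\Phibf_{\ux}$. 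The remaining delicate point is the invertibility of $\Phibf_{\yy}$ in $\Rp$ needed to make $\Kbf=\Phibf_{\uy}\Phibf_{\yy}^{-1}$ well-defined, which I would handle by noting that the controller recovered via \cor{of-SLS} together with the relation $\Phibf_{\uy}=\Kbf\Phibf_{\yy}$ forces $\Phibf_{\yy}$ to be a right factor of $\Phibf_{\uy}$ consistent with a proper $\Kbf$.
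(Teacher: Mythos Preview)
Your forward direction is exactly what the paper does, just written componentwise: the paper encodes your ``substitute the $\xbf$-row expressions into the $\ybf$-row'' step as left-multiplication of $(I-\Rbf)\Sbf=I$ by
\[
\Gamma_1=\mat{I&O&O\\O&I&O\\C(zI-A)^{-1}&O&I},
\]
which turns the $\ybf$-row of $I-\Rbf$ into $\mat{O&-\Gbf&I}$ and the $\ybf$-row of the right-hand side into $\mat{C(zI-A)^{-1}&O&I}$, giving the first affine constraint in one stroke. The second constraint and the controller formula are obtained from $\Sbf(I-\Rbf)=I$ just as you describe. So on the forward side your proposal and the paper's are the same argument in different notation.

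On the converse your plan has a genuine gap. To invoke \cor{of-SLS} you need $\Phibf_{\xx},\Phibf_{\xy}\in z^{-1}\RHinf$, but your argument that $(zI-A)\Phibf_{\xx}=I+B\Phibf_{\ux}\in\RHinf$ ``forces the $z^{-1}$ factor'' only yields strict properness; it says nothing about stability, since $\Phibf_{\xx}=(zI-A)^{-1}(I+B\Phibf_{\ux})$ inherits the possibly unstable poles of $(zI-A)^{-1}$. The hypotheses of \cor{mixed-1} give you $C(zI-A)^{-1}=\Phibf_{\yx}-\Gbf\Phibf_{\ux}\in\RHinf$ and hence $C\Phibf_{\xx}\in\RHinf$, but not $\Phibf_{\xx}\in\RHinf$ without a detectability-type assumption on $(A,C)$. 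The paper's own proof is too terse to see how (or whether) it closes this gap---it simply says the derivation is ``analogous to the proof of \cor{of-SLS}''---so you are not contradicting the paper, but if you want a complete reduction to \cor{of-SLS} you will need an extra ingredient beyond what you have written.
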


\begin{corollary}\label{cor:mixed-2}
For the realization in~\fig{realization-output-feedback}, the set of all proper internally stabilizing output-feedback controller is parameterized by $\{\Phibf_{\xy},\Phibf_{\uy},\Phibf_{\xu},\Phibf_{\uu}\}$ that lies in the affine space defined by
\begin{align*}
\mat{
zI-A & -B
}
\mat{
\Phibf_{\xy} & \Phibf_{\xu}\\
\Phibf_{\uy} & \Phibf_{\uu}
}
=&
O,\\
\mat{
\Phibf_{\yx} & \Phibf_{\yy}\\
\Phibf_{\ux} & \Phibf_{\uy}
}
\mat{
-\Gbf \\ I
}
=&
\mat{
(zI - A)^{-1}B \\ I
},\\
\Phibf_{\xy},\Phibf_{\uy},\Phibf_{\xu},\Phibf_{\uu} \in&\ \RHinf,
\end{align*}
and the controller is given by
\begin{align*}
\Kbf = \Phibf_{\uu}^{-1}\Phibf_{\uy}.
\end{align*}
\end{corollary}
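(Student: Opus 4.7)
The plan is to mirror the template of the SLS and IOP corollary proofs. I would reuse the realization matrix $\Rbf$ and internal state $\etabf = \mat{\xbf \\ \ubf \\ \ybf}$ from the proof of \cor{of-SLS}, identify the four parameters $\{\Phibf_{\xy},\Phibf_{\uy},\Phibf_{\xu},\Phibf_{\uu}\}$ with the corresponding blocks of $\Sbf$, and appeal to \lem{R-S} together with condition \eqn{R-S-condition} to derive both affine equations as well as membership in $\RHinf$.

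For the forward direction, the first affine equation comes directly from reading off the $\ybf$ and $\ubf$ columns of the top block row of $(I-\Rbf)\Sbf = I$, yielding $(zI-A)\Phibf_{\xy} - B\Phibf_{\uy} = O$ and $(zI-A)\Phibf_{\xu} - B\Phibf_{\uu} = O$. The second equation takes a few more steps: from the $\ubf$-column of $\Sbf(I-\Rbf)=I$ one obtains $\Sbf_{\xu} = \Sbf_{\xx}B + \Sbf_{\xy}D$ and $\Sbf_{\uu} = \Sbf_{\ux}B + \Sbf_{\uy}D + I$, while the $\xbf$-column gives $\Sbf_{\xx}(zI-A) = I + \Sbf_{\xy}C$ and $\Sbf_{\ux}(zI-A) = \Sbf_{\uy}C$. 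Eliminating $\Sbf_{\xx}$ and $\Sbf_{\ux}$ and substituting $\Gbf = C(zI-A)^{-1}B + D$ then produces exactly $\Phibf_{\xu} - \Phibf_{\xy}\Gbf = (zI-A)^{-1}B$ and $\Phibf_{\uu} - \Phibf_{\uy}\Gbf = I$.

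For the converse, given parameters in the affine space, the $\ybf$-column of $\Sbf(I-\Rbf)=I$ reads $\Sbf_{\uu}\Kbf = \Sbf_{\uy}$, so $\Kbf = \Phibf_{\uu}^{-1}\Phibf_{\uy}$. Invertibility of $\Phibf_{\uu}$ in $\Rp$ follows from $\Phibf_{\uu} = I + \Phibf_{\uy}\Gbf$ by a series argument essentially identical to the one used in the IOP proof for $\Ybf$. The remaining blocks of $\Sbf$, namely $\Sbf_{\xx}, \Sbf_{\ux}, \Sbf_{\yx}, \Sbf_{\yy}, \Sbf_{\yu}$, would then be recovered from the four parameters and the realization data via \lem{R-S} and \lem{dependency} and each verified to lie in $\RHinf$.

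The main obstacle I anticipate is the elimination step in the forward direction, where one must be careful with left- versus right-multiplication by $zI-A$ and $C$ and track which relations come from $(I-\Rbf)\Sbf=I$ versus $\Sbf(I-\Rbf)=I$; a secondary concern is the invertibility of $\Phibf_{\uu}$, which implicitly requires strict properness of $\Phibf_{\uy}\Gbf$. Recovering the remaining $\Sbf$ blocks should be routine given the block structure of $\Rbf$.
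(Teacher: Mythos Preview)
Your proposal is correct and follows the same template the paper uses for the SLS and IOP corollaries.  The paper's own proof of this corollary is in fact much more terse than yours: it simply observes that the second affine constraint follows from right-multiplying $\Sbf(I-\Rbf)=I$ by
\[
\Gamma_2 = \mat{I & (zI-A)^{-1}B & O\\ O & I & O\\ O & O & I},
\]
since $(I-\Rbf)\Gamma_2$ has second block-column $\mat{O\\ I\\ -\Gbf}$, so $\Sbf(I-\Rbf)\Gamma_2=\Gamma_2$ yields $-\Phibf_{\xy}\Gbf+\Phibf_{\xu}=(zI-A)^{-1}B$ and $-\Phibf_{\uy}\Gbf+\Phibf_{\uu}=I$ directly.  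This is exactly your block-by-block elimination of $\Sbf_{\xx}$ and $\Sbf_{\ux}$, packaged as a single matrix multiplication; your route is more explicit and addresses both directions and the invertibility of $\Phibf_{\uu}$ in detail, which the paper's sketch omits.
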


We give a brief proof below for the two corollaries above.

\begin{proof}

\lem{R-S} gives
\begin{align*}
I =&\ (I-\Rbf)\Sbf\\
=&
\mat{
zI-A & -B & O \\
O & I & -\Kbf \\
-C & -D & I
}
\mat{
\Sbf_{\xx} & \Phibf_{\xu} & \Phibf_{\xy} \\
\Phibf_{\ux} & \Phibf_{\uu} & \Phibf_{\uy} \\
\Phibf_{\yx} & \Sbf_{\yu} & \Phibf_{\yy}
}.
\end{align*}
We consider two matrices
\begin{align*}
\Gamma_1 =& \mat{
I & O & O\\
O & I & O\\
C(zI-A)^{-1} & O & I
},\\
\Gamma_2 =& \mat{
I & (zI-A)^{-1}B & O\\
O & I & O\\
O & O & I
}.
\end{align*}

Analogous to the proof of \cor{of-SLS}, \cor{mixed-1} can be derived from the following conditions and condition \eqn{R-S-condition}.
\begin{align*}
\Gamma_1(I-\Rbf)\Sbf = \Gamma_1, \quad
\Sbf(I-\Rbf) = I.
\end{align*}
Similarly, we derive \cor{mixed-2} from condition \eqn{R-S-condition} and
\begin{align*}
(I-\Rbf)\Sbf = I, \quad
\Sbf(I-\Rbf)\Gamma_2 = \Gamma_2.
\end{align*}
\end{proof}

\subsection{Equivalence among Synthesis Methods}

The parameterizations above are shown equivalent in \cite{zheng2020equivalence} through careful calculations. Here we demonstrate how \lem{R-S} and transformations lead to more straightforward derivations of equivalent components. 

\lem{R-S} implies that $\Rbf \to \Sbf$ is a one-to-one mapping. Therefore, to show the equivalence among different synthesis methods, we can simply find a transformation $\Tbf$ such that the equivalent system has the same realization as the other system. As such, \lem{R-S} suggests that the stability matrices are the same, and we just need to compare the elements correspondingly.

When comparing a state-feedback system with an output-feedback system in the following analyses, we assume that the state $\xbf$ is taken as the measurement $\ybf$.

\paragraph{Youla parameterization and IOP:} Youla parameterization and IOP share the same realization in~\fig{realization-G-K} (except for changing $\xbf$ to $\ybf$). Therefore,
\begin{align*}
\mat{
\Ybf & \Wbf\\
\Ubf & \Zbf
}
=&
\mat{
\Ubf_r & \Nbf_r\\
\Vbf_r & \Mbf_r
}
\mat{
I & O\\
-\Qbf & I
}
\mat{
\Mbf_l & O\\
O & \Ubf_l -\Qbf\Nbf_l \\
}
\\
=&
\mat{
(\Ubf_r - \Nbf_r \Qbf) \Mbf_l & (\Ubf_r - \Nbf_r \Qbf) \Nbf_l \\
(\Vbf_r - \Mbf_r \Qbf) \Mbf_l & I + (\Vbf_r - \Mbf_r \Qbf) \Nbf_l
}.
\end{align*}

\paragraph{IOP and SLP:} We then show the equivalence between IOP and SLP. For state-feedback SLP with realization in~\fig{realization-state-feedback}, we perform the transformation
\begin{align*}
\Tbf^{-1} = \mat{(zI-A)^{-1} & O\\ O & I}, \quad 
\Tbf = \mat{zI-A & O\\ O & I},
\end{align*}
which leads to
\begin{align*}
\Tbf^{-1} \mat{
zI-A & -B \\
-\Kbf & I} = 
\mat{I & -\Gbf \\ -\Kbf & I}.
\end{align*}
Accordingly, the stability matrix becomes
\begin{align*}
\mat{\Ybf & \Wbf \\ \Ubf & \Zbf} =& \mat{\Phibf_{\xbf} & \Phibf_{\xbf}B \\ \Phibf_{\ubf} & I + \Phibf_{\ubf}B }\Tbf \\
=&
\mat{\Phibf_{\xbf}(zI-A) & \Phibf_{\xbf}B(zI-A) \\ \Phibf_{\ubf} & I + \Phibf_{\ubf}B }.
\end{align*}

For output-feedback SLP, we consider the transformation
\begin{align*}
\Tbf^{-1} =& \mat{
I & O & O\\
O & I & O\\
C(zI-A)^{-1} & O & I
},\\
\Tbf =& \mat{
I & O & O\\
O & I & O\\
-C(zI-A)^{-1} & O & I
},
\end{align*} 
which leads to
\begin{align*}
\Tbf^{-1} (I - \Rbf) =&\ \Tbf^{-1}
\mat{
zI-A & -B & O \\
O & I & -\Kbf \\
-C & -D & I
}\\
=&
\mat{
zI-A & -B & O\\
O & I & -\Kbf \\
O & -\Gbf & I
}, \quad \etabf = \mat{\xbf\\ \ubf\\ \ybf}.
\end{align*}
And the transformed stability matrix is
\begin{align*}
\Sbf\Tbf =&
\mat{
\Phibf_{\xx} & \Sbf_{\xu} & \Phibf_{\xy} \\
\Phibf_{\ux} & \Sbf_{\uu} & \Phibf_{\uy} \\
\Sbf_{\yx} & \Sbf_{\yu} & \Sbf_{\yy}
}\Tbf\\
=&
\mat{
\Phibf_{\xx} - \Phibf_{\xy}C(zI-A)^{-1} & \Sbf_{\xu} & \Phibf_{\xy} \\
\Phibf_{\ux} - \Phibf_{\uy}C(zI-A)^{-1} & \Sbf_{\uu} & \Phibf_{\uy} \\
\Sbf_{\yx} - \Sbf_{\yy}C(zI-A)^{-1} & \Sbf_{\yu} & \Sbf_{\yy}
}.
\end{align*}

Comparing the corresponding elements and we have
\begin{align*}
\mat{\Ybf & \Wbf \\ \Ubf & \Zbf}
=&
\mat{\Sbf_{\yy} & \Sbf_{\yu}\\
\Phibf_{\uy} & \Sbf_{\uu}
}\\
=&
\mat{
C\Phibf_{\xy} + D\Phibf_{\uy} + I & \Sbf_{\yu}\\
\Phibf_{\uy} & \Phibf_{\ux}B + \Phibf_{\uy}D + I
}
\end{align*}
where
\begin{align*}
\Sbf_{\yu} = (C\Phibf_{\xx} + D\Phibf_{\ux})B + (C\Phibf_{\xy} + D\Phibf_{\uy} + I)D.
\end{align*}
Our result extends the $D = O$ case in \cite{zheng2020equivalence} to general $D$.

\paragraph{SLP and mixed parameterizations:} SLP and mixed parameterizations share the same realization \fig{realization-output-feedback}. Therefore, they also share the same stability matrix according to \lem{R-S}, i.e., $\Phibf_{\xu},\Phibf_{\uu},\Phibf_{\yx},$ and $\Phibf_{\yy}$ can be found in \eqn{of-SLS-S-1} and \eqn{of-SLS-S-2}.

\section{Corollaries: Realizations}\label{sec:realizations}
The same parameterization could admit multiple different realizations\footnote{We remark that once $\Sbf$ is fixed, $\Rbf$ is uniquely defined by \lem{R-S} (if existing). However, one parameterization may not include the whole $\Sbf$, and hence there are still some degrees of freedom for different realizations $\Rbf$.}. In this section, we consider the original state-feedback SLS realization and two alternative realization proposals for SLS. We show that the realizations can be derived from \lem{R-S} through transformations.

\subsection{Original State-Feedback SLS Realization}
SLP parameterizes all internally stabilizing controller $\Kbf$ for the state-feedback system in~\fig{realization-state-feedback}. Using the resulting $\SFpair$, SLS proposes to implement the controller as in~\fig{realization-sf-SLS}.

\begin{figure}
\centering
\includegraphics[scale=1]{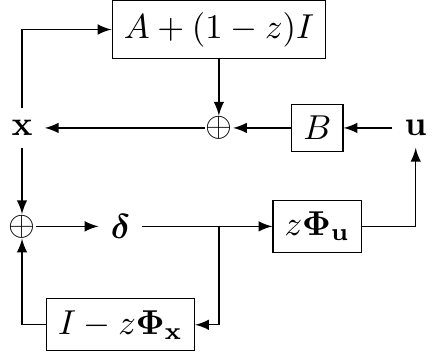}
\caption{The realization proposed in the original state-feedback SLS using the SLP $\SFpair$. By introducing an additional signal $\deltabf$, this realization avoids taking the inverse of $\Phibf_{\xbf}$.}
\label{fig:realization-sf-SLS}
\end{figure}

In other words, given $\Rbf$ as in \eqn{sf-SLS-R} and $\Sbf$ satisfying \lem{R-S}, we can realize the closed-loop system by
\begin{align*}
\Rbf_r = \mat{
A + (1-z)I & B & O\\
O & O & z\Phibf_{\ubf}\\
I & O & I - z\Phibf_{\xbf}
},
\quad
\etabf=\mat{
\xbf\\ \ubf\\ \deltabf
}.
\end{align*}

To show that, we augment \eqn{sf-SLS-R} with a dummy node
\begin{align}
I =&\ (I-\Rbf_{aug})\Sbf_{aug} \nonumber \\
=& 
\mat{
zI-A & -B & O\\
-\Kbf & I & O\\
O & O & I
}
\mat{
\Phibf_{\xbf} & \Sbf_{\xu} & O \\ 
\Phibf_{\ubf} & \Sbf_{\uu} & O \\
O & O & I 
}
\label{eqn:augmented}
\end{align}
and perform the following transformation on the augmented system to achieve the desired realization
\begin{align*}
\Tbf^{-1} =& 
\mat{
I & O & O\\
\Phibf_{\ubf} & \Sbf_{\uu} & -z\Phibf_{\ubf}\\
-\Phibf_{\xbf} & -\Sbf_{\xu} & z\Phibf_{\xbf}
},\\
\Tbf =& 
\mat{
I & O & O\\
O & I & \Kbf \\
z^{-1} & z^{-1}B & I - z^{-1}A
}.
\end{align*}
The realization is internally stable as
\begin{align*}
\Sbf_r =&\ \Sbf_{aug}\Tbf
=
\mat{
\Phibf_{\xbf} & \Sbf_{\xu} & \Sbf_{\xu}\Kbf \\ 
\Phibf_{\ubf} & \Sbf_{\uu} & \Sbf_{\uu}\Kbf \\
z^{-1} & z^{-1}B & I - z^{-1}A
}\\
=&
\mat{
\Phibf_{\xbf} & \Sbf_{\xu} & \Phibf_{\xbf}(zI-A) - I \\ 
\Phibf_{\ubf} & \Sbf_{\uu} & \Phibf_{\ubf}(zI-A) \\
z^{-1} & z^{-1}B & I - z^{-1}A
} \in \RHinf.
\end{align*}

\subsection{Simpler Realization for Deployment}
The original SLS realization in~\fig{realization-sf-SLS} needs to perform two convolutions $I - z\Phibf_{\xbf}$ and $z\Phibf_{\ubf}$, which are expensive to implement in practice. Therefore, \cite{tseng2020deployment} proposes a new realization in~\fig{realization-Tseng2020} that replaces one convolution by two matrix multiplications through the following corollary \cite[Theorem 1]{tseng2020deployment}.
\begin{figure}
\centering
\includegraphics[scale=1]{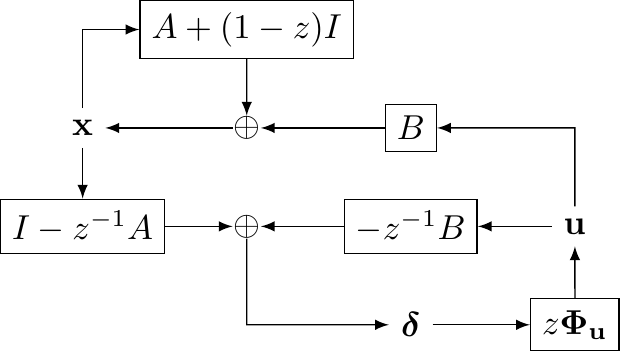}
\caption{The realization proposed in~\cite{tseng2020deployment} that realizes the SLS state-feedback controller using only one convolution $z\Phibf_{\ubf}$.}
\label{fig:realization-Tseng2020}
\end{figure}

\begin{corollary}
Let $A$ be Schur stable, the dynamic state-feedback controller $\ubf = \Kbf \xbf$ realized via
\begin{align*}
\delta\t =&\ x\t - Ax\tm - Bu\tm,\\
u\t =&\ \sum\limits_{\tau \geq 1} \Phi_{u}[\tau] \delta[t+1-\tau]
\end{align*}
is internally stabilizing.
\end{corollary}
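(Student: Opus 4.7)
The plan is to write the realization matrix $\Rbf_r$ of \fig{realization-Tseng2020} on internal state $\etabf = \mat{\xbf\\ \ubf\\ \deltabf}$, block-triangularize $I-\Rbf_r$ via a single row operation (an invertible transformation $\Tbf$ in the sense of \sec{R-S_lemma-T}), read off $\Sbf_r$ using \lem{R-S}, and check entrywise that every block sits in $\RHinf$.

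Converting the two update rules together with the plant dynamics $z\xbf = A\xbf + B\ubf$ into the $z$-domain, one obtains
\begin{align*}
\Rbf_r = \mat{A + (1-z)I & B & O\\ O & O & z\Phibf_{\ubf}\\ I - z^{-1}A & -z^{-1}B & O}.
\end{align*}
The key identity is $I - z^{-1}A = z^{-1}(zI - A)$: adding $z^{-1}$ times the first row of $I - \Rbf_r$ to its third row kills the $\xbf$- and $\ubf$-blocks of that row. Equivalently, with $\Tbf = \mat{I & O & O\\ O & I & O\\ -z^{-1}I & O & I}$, the matrix $\Tbf^{-1}(I - \Rbf_r)$ is block upper triangular with diagonal $(zI-A,\ I,\ I)$, whose inverse is immediate. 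Multiplying on the right by $\Tbf^{-1}$ and collapsing the $(\xbf,\xbf)$-block using the SLP identity $(zI-A)\Phibf_{\xbf} = I + B\Phibf_{\ubf}$ from \cor{sf-SLS} then produces
\begin{align*}
\Sbf_r = \mat{\Phibf_{\xbf} & (zI-A)^{-1}B & (zI-A)^{-1}Bz\Phibf_{\ubf}\\ \Phibf_{\ubf} & I & z\Phibf_{\ubf}\\ z^{-1}I & O & I}.
\end{align*}

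Stability then follows block by block: Schur stability of $A$ places $(zI-A)^{-1}$ in $\RHinf$; $\Phibf_{\ubf} \in z^{-1}\RHinf$ upgrades $z\Phibf_{\ubf}$ to $\RHinf$; and $\Phibf_{\xbf},\ \Phibf_{\ubf},\ z^{-1}I$ lie in $\RHinf$ by assumption, so every entry is a sum or product of stable terms. The main obstacle is just spotting the row operation, i.e., the factorization $I - z^{-1}A = z^{-1}(zI - A)$ that triangularizes $I-\Rbf_r$; afterwards the computation is pure bookkeeping. Conceptually, the extra Schur-stability hypothesis on $A$ is exactly what keeps the raw plant inverse $(zI-A)^{-1}$ stable, since in this realization it appears explicitly on the $\dbf_u \to \xbf$ and $\dbf_\delta \to \xbf$ paths---whereas in the original SLS realization it was absorbed into $\Phibf_{\xbf}$ and so no such assumption on $A$ was required.
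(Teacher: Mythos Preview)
Your proof is correct and arrives at exactly the same stability matrix as the paper, but the route is genuinely different. The paper starts from the \emph{augmented} original state-feedback SLS system \eqn{augmented} and exhibits an invertible $\Tbf$ that maps $I-\Rbf_{aug}$ onto $I-\Rbf_r$; the stability matrix is then read off as $\Sbf_r=\Sbf_{aug}\Tbf$, and the only nontrivial step is verifying $\Sbf_{\xu}\Sbf_{\uu}^{-1}=(zI-A)^{-1}B\in\RHinf$. You instead work intrinsically with $\Rbf_r$: the single row operation (your $\Tbf$) block-triangularizes $I-\Rbf_r$, so $\Sbf_r=M^{-1}\Tbf^{-1}$ is immediate, and the SLP identity $(zI-A)\Phibf_\xbf=I+B\Phibf_\ubf$ collapses the $(\xbf,\xbf)$ block to $\Phibf_\xbf$. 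Your argument is shorter and self-contained---it never needs $\Sbf_{aug}$, $\Sbf_{\uu}^{-1}$, or the identity $\Phibf_\ubf(zI-A)=\Sbf_{\uu}\Kbf$---whereas the paper's transformation explicitly ties the new realization back to the original SLS block diagram, which better serves the paper's running theme that all realizations are equivalent systems under disturbance transformations. Your closing observation, that Schur stability of $A$ is exactly what is needed because $(zI-A)^{-1}$ now appears bare on the $\dbf_\ubf\!\to\!\xbf$ path rather than being absorbed into $\Phibf_\xbf$, is a nice structural remark not made in the paper.
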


\begin{proof}
We first write the controller realization in frequency domain:
\begin{align*}
\deltabf =&\ (I - z^{-1}A)\xbf - B\ubf,\\
\ubf =&\ z\Phibf_{\ubf}\deltabf.
\end{align*}
Together with the system, the realization is shown in~\fig{realization-Tseng2020}.

Essentially, the corollary says that given $\Rbf$ as in \eqn{sf-SLS-R} and $\Sbf$ satisfying \lem{R-S}, we can realize the closed-loop system by
\begin{align*}
\Rbf_{r} = \mat{
A+(1-z)I & B & O\\
O & O & z\Phibf_{\ubf}\\
z^{-1}(zI-A) & -z^{-1}B & O
}, \quad \etabf_{r} = \mat{\xbf\\ \ubf\\ \deltabf}.
\end{align*}

Again, we consider the augmented system in \eqn{augmented} and transform it to achieve the desired realization by
\begin{align*}
\Tbf^{-1} =& 
\mat{
I & O & O\\
\Phibf_{\ubf} & \Sbf_{\uu} & -z\Phibf_{\ubf}\\
-z^{-1} & O & I
},\\
\Tbf =& 
\mat{
I & O & O\\
O & \Sbf_{\uu}^{-1} & z\Sbf_{\uu}^{-1}\Phibf_{\ubf} \\
z^{-1} & O & I
}.
\end{align*}
As such, $(I-\Rbf_{r}) = \Tbf^{-1}(I-\Rbf_{aug})$ and the resulting stability matrix is
\begin{align*}
\Sbf_{r} = \Sbf_{aug}\Tbf = \mat{
\Phibf_{\xbf} & \Sbf_{\xu}\Sbf_{\uu}^{-1} & z\Sbf_{\xu}\Sbf_{\uu}^{-1}\Phibf_{\ubf} \\ 
\Phibf_{\ubf} & I & z\Phibf_{\ubf} \\
z^{-1} & O & I 
}.
\end{align*}
Since $\Sbf_{\uu} = I + \Phibf_{\ubf}B$ is invertible, $(zI-A)\Sbf_{\xu} = B\Sbf_{\uu}$, and $A$ is Schur stable, we have
\begin{align*}
\Sbf_{\xu}\Sbf_{\uu}^{-1} = (zI-A)^{-1}B \in \RHinf,
\end{align*}
and hence the stability matrix is in $\RHinf$.
\end{proof}

In \cite{tseng2020deployment}, the authors substitute $\ubf$ into $\deltabf$ before analyzing the internal stability, which is simply another (linear) transformation of $\dbf$ and the resulting stability matrix is still internally stable. 

\subsection{Closed-Loop Design Separation}
Instead of directly adopting the realization in~\fig{realization-sf-SLS}, \cite{li2020separating} found that it is possible to use much simpler transfer matrices to realize the same controller. The following corollary is from \cite[Theorem 2]{li2020separating}\footnote{To avoid the confusion with the realization matrix $\Rbf$, we write $\Pbf_c$ here instead.}.
\begin{figure}
\centering
\includegraphics[scale=1]{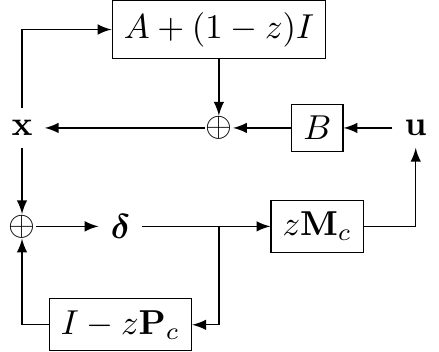}
\caption{The realization proposed in~\fig{realization-Li2020} that uses simpler transfer matrices $z\Mbf_c$ and $I-z\Pbf_c$ to implement the SLS state-feedback controller $\Phibf_{\ubf}\Phibf_{
\xbf}^{-1}$.}
\label{fig:realization-Li2020}
\end{figure}

\begin{corollary}\label{cor:Li2020}
For the causal realization in~\fig{realization-Li2020} and a given $\SFpair$ that satisfies \cor{sf-SLS}, $\{\Pbf_c,\Mbf_c\}$ realizes $\SFpair$ if and only if they satisfy
\begin{align}
\mat{
\Phibf_{\xbf} \\
\Phibf_{\ubf} 
}
\mat{
zI-A & -B
}
\mat{
\Pbf_c\\
\Mbf_c
}
=
\mat{
\Pbf_c\\
\Mbf_c
}.
\label{eqn:Li2020}
\end{align}
\end{corollary}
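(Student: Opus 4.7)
The plan is to treat Fig.~\ref{fig:realization-Li2020} as a closed-loop system with internal state $\etabf_r = [\xbf;\,\ubf;\,\deltabf]$ and apply \lem{R-S}, exactly as was done for Fig.~\ref{fig:realization-sf-SLS} and Fig.~\ref{fig:realization-Tseng2020} in the preceding two subsections. The condition that $\{\Pbf_c,\Mbf_c\}$ realizes $\SFpair$ will translate into requiring the $\xbf$-column of the resulting stability matrix to match $(\Phibf_\xbf,\Phibf_\ubf)$, and the algebraic content of \eqref{eqn:Li2020} will fall out directly from this matching.

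First, I would read the realization matrix off the block diagram. Since Fig.~\ref{fig:realization-Li2020} is structurally identical to Fig.~\ref{fig:realization-sf-SLS} after swapping the blocks $(I-z\Phibf_\xbf,z\Phibf_\ubf)$ for $(I-z\Pbf_c,z\Mbf_c)$, I expect
\begin{align*}
\Rbf_r = \mat{A + (1-z)I & B & O \\ O & O & z\Mbf_c \\ I & O & I - z\Pbf_c}, \quad \etabf_r = \mat{\xbf\\ \ubf\\ \deltabf}.
\end{align*}
Applying \lem{R-S}, the $\xbf$-column of $\Sbf_r$ satisfies $(I-\Rbf_r)\Sbf_{r,:,\xbf} = e_\xbf$, a $3\times 1$ block system. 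Solving bottom-up yields $\Sbf_{r,\deltabf\xbf} = z^{-1}\Pbf_c^{-1}\Sbf_{r,\xx}$ and $\Sbf_{r,\ux} = \Mbf_c\Pbf_c^{-1}\Sbf_{r,\xx}$; substituting into the top block gives $\Xi\,\Pbf_c^{-1}\Sbf_{r,\xx} = I$, where $\Xi := (zI-A)\Pbf_c - B\Mbf_c$. Hence $\Sbf_{r,\xx} = \Pbf_c\Xi^{-1}$ and $\Sbf_{r,\ux} = \Mbf_c\Xi^{-1}$.

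Next I equate these two expressions with the SLP blocks $\Phibf_\xbf$ and $\Phibf_\ubf$ guaranteed by \cor{sf-SLS}. The equalities $\Phibf_\xbf = \Pbf_c\Xi^{-1}$ and $\Phibf_\ubf = \Mbf_c\Xi^{-1}$ rearrange to $\Phibf_\xbf\Xi = \Pbf_c$ and $\Phibf_\ubf\Xi = \Mbf_c$, which stacked is precisely \eqref{eqn:Li2020}. For the converse, assuming \eqref{eqn:Li2020} I would recompute $\Sbf_r$ using the derived formulas and check that its $\xbf$-column collapses to $(\Phibf_\xbf,\Phibf_\ubf)$; the bijectivity of the map $\Rbf\mapsto\Sbf$ from \lem{R-S} then certifies that $\{\Pbf_c,\Mbf_c\}$ realizes the same $\SFpair$.

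The hard part will be handling invertibilities carefully. The derivation above presumed that $\Pbf_c$ and $\Xi$ are invertible as rational matrices, and I would argue that the causal realization qualifier in the statement is exactly what underwrites well-posedness of the feedback loops through $I-z\Pbf_c$ and $z\Mbf_c$. As a consistency check, left-multiplying \eqref{eqn:Li2020} by $\mat{zI-A & -B}$ and invoking the SLS affine constraint $(zI-A)\Phibf_\xbf - B\Phibf_\ubf = I$ collapses to the tautology $\Xi = \Xi$; this shows the condition does not over-determine the problem, but it also means invertibility of $\Xi$ must be secured from the realization's well-posedness rather than algebraically from \eqref{eqn:Li2020} alone. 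Once that issue is resolved, the remainder of the proof is just the block-row arithmetic above.
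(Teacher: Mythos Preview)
Your plan is sound and reaches the same endpoint as the paper, but the paper takes a different and somewhat slicker route for the forward direction. Rather than solving the $\xbf$-column of $\Sbf_r$ by elimination, the paper observes that the vector $v=[z\Pbf_c;\,z\Mbf_c;\,I]$ is annihilated by the last two block-rows of $I-\Rbf_r$, so that $(I-\Rbf_r)v=[z\Xi;\,O;\,O]$ with $\Xi=(zI-A)\Pbf_c-B\Mbf_c$. Multiplying on the left by $\Sbf_r$ and invoking $\Sbf_r(I-\Rbf_r)=I$ from \lem{R-S} yields $[\Phibf_\xbf;\,\Phibf_\ubf;\,\Sbf_{\deltabf\xbf}]\,z\Xi=v$ directly, i.e., \eqref{eqn:Li2020} together with $\Sbf_{\deltabf\xbf}=z^{-1}\Xi^{-1}$. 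For the converse the paper writes out the full $3\times3$ $\Sbf_r$ and reduces existence to invertibility of $\Xi$, which it establishes from causality by showing $\Xi=I-z^{-1}\Jbf$ with $\Jbf\in\Rp$, whence $\Xi^{-1}=\sum_{k\ge0}z^{-k}\Jbf^k\in\Rp$.

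Your back-substitution is a perfectly valid alternative and arguably more transparent, but one step deserves a small repair: you pass through $\Pbf_c^{-1}$ when solving row~3, yet nothing in the hypotheses guarantees $\Pbf_c$ is invertible as a rational matrix. This is unnecessary. If you instead read rows~2 and~3 of $(I-\Rbf_r)\Sbf_{r,:,\xbf}=e_\xbf$ as
\[
\Sbf_{r,\ux}=z\Mbf_c\,\Sbf_{r,\deltabf\xbf},\qquad \Sbf_{r,\xx}=z\Pbf_c\,\Sbf_{r,\deltabf\xbf},
\]
and substitute into row~1, you obtain $z\Xi\,\Sbf_{r,\deltabf\xbf}=I$ and hence $\Sbf_{r,\xx}=\Pbf_c\Xi^{-1}$, $\Sbf_{r,\ux}=\Mbf_c\Xi^{-1}$ without ever inverting $\Pbf_c$. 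With that fix the rest of your argument goes through unchanged, and your identification of the invertibility of $\Xi$ as the crux---to be discharged from the ``causal realization'' hypothesis---matches the paper exactly.
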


\begin{proof}
The corollary says that for the realization
\begin{align*}
\Rbf = \mat{
A + (1-z)I & B & O\\
O & O & z\Mbf_c\\
I & O & I - z\Pbf_c
},
\quad
\etabf=\mat{
\xbf\\ \ubf\\ \deltabf
}
\end{align*}
and the given $(\Phibf_{\xbf}, \Phibf_{\ubf})$, there exists a solution
\begin{align*}
\Sbf = \mat{
\Phibf_{\xbf} & \Sbf_{\xu} & \Sbf_{\xd}\\
\Phibf_{\ubf} & \Sbf_{\uu} & \Sbf_{\ud}\\
\Sbf_{\dx} & \Sbf_{\du} & \Sbf_{\dd}
}
\end{align*}
satisfying \lem{R-S} if and only if \eqn{Li2020} holds.

If such an $\Sbf$ exists, 
we have
\begin{align*}
(I-\Rbf)
\mat{
z\Pbf_c\\
z\Mbf_c\\
I
}
=
\mat{
zI-A & -B & O\\
O & O & O\\
O & O & O
}
\mat{
z\Pbf_c\\
z\Mbf_c\\
I
}.
\end{align*}
Therefore, by \lem{R-S}, $\Sbf (I-\Rbf) = I$ and 
\begin{align}
&\ \Sbf(I-\Rbf)\mat{
z\Pbf_c\\
z\Mbf_c\\
I
} = \mat{
z\Pbf_c\\
z\Mbf_c\\
I
} \label{eqn:Li2020-Sdx}\\
\Rightarrow&
\mat{
\Phibf_{\xbf} \\
\Phibf_{\ubf} 
}
\mat{
zI-A & -B
}
\mat{
z\Pbf_c\\
z\Mbf_c
}
=
\mat{
z\Pbf_c\\
z\Mbf_c
}.\nonumber
\end{align}
\eqn{Li2020} then follows from dividing both sides by $z$.

On the other hand, when \eqn{Li2020} holds and $\SFpair$ satisfies \cor{sf-SLS}, the stability matrix can be derived from $\Sbf(I-\Rbf) = I$ as
\begin{align*}
\Sbf =
 \mat{
\Phibf_{\xbf} & \Phibf_{\xbf}B & \Phibf_{\xbf}(zI-A) - I \\
\Phibf_{\ubf} & I + \Phibf_{\ubf}B & \Phibf_{\ubf}(zI-A) \\
\Sbf_{\dx} & \Sbf_{\dx}B & \Sbf_{\dx}(zI-A)
}
\end{align*}
where, by \eqn{Li2020-Sdx},
\begin{align*}
\Sbf_{\dx} = z^{-1} \Deltabf_c^{-1} = z^{-1}\left( \mat{
zI-A & -B
}
\mat{
\Pbf_c\\
\Mbf_c
} \right)^{-1}.
\end{align*}

$\Sbf$ exists if $\Sbf_{\dx}$ exists. In other words, we have to show that $\Deltabf_c$ is invertible. Since the system is causal, $I-z\Pbf_c$ and $z\Mbf_c$ are both in $\Rp$. Therefore,
\begin{align*}
\Deltabf_c = z\Pbf_c - (A\Pbf_c + B\Mbf_c) = I - z^{-1}\Jbf
\end{align*}
where $\Jbf \in \Rp$, and hence
\begin{align*}
\Deltabf_c^{-1} = I + \sum\limits_{k \geq 1}^{\infty}  z^{-k}\Jbf^k \in \Rp,
\end{align*}
which concludes the proof.
\end{proof}

We remark that \cor{Li2020} does not guarantee that $\Sbf \in \RHinf$, and hence the authors in \cite{li2020separating} propose to perform a posteriori stability check. According to the proof of \cor{Li2020}, we can easily guarantee $\Sbf \in \RHinf$ by requiring $\Sbf_{\dx} \in z^{-1}\RHinf$ (to ensure $\Sbf_{\dd} = \Sbf_{\dx}(zI-A) \in \RHinf$). This is one benefit resulting from the analysis using \lem{R-S} and condition \eqn{R-S-condition}.

\section{Conclusion}\label{sec:conclusion}
We derived the realization-stability lemma, introduced the concept of equivalent systems through transformation, and formulated the general controller synthesis problem. Several existing controller parameterization methods, including Youla parameterization, IOP, SLP, and some new mixed parameterizations, are all special cases of the general framework with different realizations. Existing realization results can also be derived from the lemma. Through these case studies, we demonstrate a unified procedure to perform controller synthesis and realization derivation.

\bibliographystyle{IEEEtran}
\bibliography{Test}
\end{document}